\def\focm{0}
\newtheorem{proposition}{Proposition}
\newtheorem{theorem}{Theorem}
\newtheorem{lemma}{Lemma}
\newtheorem{corollary}{Corollary}
\newtheorem{remark}{Remark}
\renewcommand{\S}{\mathbf{H}}
\newcommand{\psd}{\succeq}
\newcommand{\nsd}{\preceq}
\newcommand{\pd}{\succ}
\newcommand{\nd}{\prec}
\newcommand{\RR}{\mathbb{R}}
\newcommand{\CC}{\mathbb{C}}
\newcommand{\Kre}{K_{\textup{re}}}
\DeclareMathOperator{\Tr}{Tr}
\DeclareMathOperator{\cl}{cl}
\DeclareMathOperator{\diag}{diag}
\newcommand{\op}{\text{op}}
\newcommand{\CVX}{\textsc{CVX}}
\newcommand{\Mosek}{\textsc{Mosek}}
\newcommand{\cvxpade}{\textsc{CvxQuad}}
\colorlet{Mycolor1}{green!10!orange!90!}
\title{Semidefinite approximations of the matrix logarithm}
\author{Hamza Fawzi\thanks{Department of Applied Mathematics and Theoretical Physics, University of Cambridge, Cambridge CB3 0WA, United Kingdom. \texttt{h.fawzi@damtp.cam.ac.uk}}
	\and James Saunderson\thanks{Department of Electrical and Computer Systems Engineering, Monash University, Victoria 3800, Australia. \texttt{james.saunderson@monash.edu}}
	\and Pablo A. Parrilo\thanks{Laboratory for Information and Decision Systems, Department of
    Electrical Engineering and Computer Science, Massachusetts
    Institute of Technology, Cambridge, MA 02139, USA. \texttt{parrilo@mit.edu}}}
\begin{document}
\maketitle

\begin{abstract}
	The matrix logarithm, when applied to Hermitian positive definite matrices, 
is concave with respect to the positive semidefinite order. This operator concavity property
leads to numerous concavity and convexity results for other matrix functions, many of 
which are of importance in quantum information theory. In this paper we show how 
to approximate the matrix logarithm with functions that preserve operator concavity
and can be described using the feasible regions of semidefinite optimization 
problems of fairly small size. Such approximations allow us to use off-the-shelf semidefinite optimization
solvers for convex optimization problems involving the matrix logarithm and
related functions, such as the quantum relative entropy. The basic ingredients
of our approach apply, beyond the matrix logarithm, to functions that are
operator concave and operator monotone. As such, we introduce strategies
for constructing semidefinite approximations that we expect will be useful,
more generally, for studying the approximation power of functions with 
small semidefinite representations.

\end{abstract}


\section{Introduction}
\label{sec:intro}
Semidefinite optimization problems are convex optimization problems that
take the form
\begin{equation}
\label{eq:sdpgenintro}
	 \textup{minimize} \;\langle c,x\rangle \quad\textup{subject to}\quad x\in L \cap \S_{+}^d
\end{equation}
where $\S^d_{+}$ is the cone of $d\times d$ Hermitian positive semidefinite matrices, and $L\subseteq \S^d$ is 
an affine subspace of $d\times d$ Hermitian matrices (thought of as a real vector space). A convex function $f$ is said to have a \emph{semidefinite representation} of size $d$ if its epigraph $\{(x,t): f(x)\leq t\}$ can be expressed in the form $\pi(L \cap \S_{+}^d)$ where $\pi$ is a linear map. The existence of such representations for many convex functions \cite{ben2001lectures} explains the importance of semidefinite programming as a class of convex optimization problems.
Understanding which convex sets and functions do and do not have
small semidefinite descriptions has been a focus of considerable recent
research effort in real algebraic geometry, optimization, and theoretical
computer science (see, e.g.,~\cite{frgbook}).

One fundamental limitation is that the feasible regions of semidefinite
optimization problems are necessarily semialgebraic sets, i.e., they can be
expressed as finite unions of sets defined by polynomial inequalities. As such,
we cannot hope to \emph{exactly} model non-semialgebraic convex sets and
functions, such as the logarithm, using semidefinite programming. This leads us
to consider the problem of understanding which general convex sets and
functions can be \emph{approximated} with high accuracy by sets with small
semidefinite representations. 

\paragraph{Semidefinite approximations} One starting point is to consider the approximation of univariate convex or
concave functions from the point of view of semidefinite optimization. \emph{How well
can we approximate a given univariate concave function with a function that is
not just concave, but also has a semidefinite representation of a given size?}
This is distinct from questions in classical approximation
theory, both due to its emphasis on preserving concavity, and also because 
the complexity of the approximating function is defined in terms
of the size of a semidefinite description, rather than the degree of a
polynomial or rational approximation. A key motivation for the study of univariate approximation theory is its
relevance for computing matrix functions~\cite{higham2008functions,ATAPbook}.  
If $g:\RR_{++}\rightarrow \RR$ then the corresponding matrix function can be defined for positive definite
Hermitian matrices $\S_{++}^n$ by 
\[ g(X) = U\diag(g(\lambda_1),\ldots,g(\lambda_n))U^*\]
where $X = U\diag(\lambda_1,\ldots,\lambda_n)U^*$ is an eigendecomposition of
$X$. To generalize our semidefinite approximation point of view to matrix
functions, we focus on functions that have a natural dimension-free concavity property
known as operator concavity.  A function $g:\RR_{++} \rightarrow \RR$ is \emph{operator concave}
if  the corresponding matrix function satisfies Jensen's inequality in the
positive semidefinite (L{\"o}wner) order, i.e., 
\[ g(\lambda X_1 + (1-\lambda)X_2) \psd \lambda g(X_1) + (1-\lambda)g(X_2)\]
for all $n$, all $X_1,X_2\in \S^n_{++}$ and all
$\lambda\in [0,1]$. Associated with any operator concave function $g$ and a positive integer $n$ is
a convex set $\{(X,T)\in \S_{++}^n\times \S^n: g(X) \psd T\}$, the \emph{matrix hypograph} of $g$. 
A good introduction to operator concave functions is \cite{carlen-notes}.

Among the most familiar and important operator concave functions is the logarithm. 
The operator concavity of the logarithm has remarkable consequences. For example, it can be
used to establish joint convexity of the (Umegaki) quantum relative entropy function,
\begin{equation}
\label{eq:quantumrelentr}
D(\rho \| \sigma) := \Tr[\rho (\log \rho - \log \sigma)],
\end{equation}
using an appropriate generalization of the perspective transform (the
\emph{noncommutative perspective}, to be defined later). The function $D$ plays
a fundamental role in quantum information theory, and its joint convexity was
first established by Lieb and Ruskai \cite{liebruskaiproofssa} building on an
earlier result of Lieb \cite{lieb1973convex}.

\paragraph{Contributions} In this paper we develop techniques to construct accurate approximations, with
small semidefinite descriptions, for the matrix logarithm.  A key motivation
for doing so is that using this basic building block, we can approximate
other important convex and concave functions arising in quantum information,
such as the quantum relative entropy.  The same basic principles we use to
approximate the matrix logarithm apply in greater generality. Our methods
partly generalize to yield high accuracy semidefinite approximations for
functions that are operator monotone and operator concave, 
as well as their matrix analogues.  Furthermore, the full
power of our approximation methods for the matrix logarithm extend to operator
concave functions that satisfy functional equations of a particular form.  As
examples in this direction we show how to obtain semidefinite approximations of
the logarithmic mean, and the arithmetic-geometric mean of Gauss. We have implemented our constructions in the
MATLAB-based modeling language CVX and they are available online on the
website:
\[
\text{\url{https://www.github.com/hfawzi/cvxquad/}}
\]
Table \ref{tbl:cvxpadefunctions} shows some of the functions implemented in the package.
\begin{table}[ht]
\begin{tabular}{lll}
\toprule
\texttt{op\_rel\_entr\_epi\_cone} & $-X^{1/2} \log\left(X^{-1/2} Y X^{-1/2}\right) X^{1/2} \preceq T$ & $m+k$ LMIs of size $2n\times 2n$ each\\
\texttt{quantum\_entr} & $\rho\mapsto -\Tr[\rho \log \rho]$ (Concave) & $m+k$ LMIs of size $2n\times 2n$ each\\
\texttt{trace\_logm} & $\rho\mapsto \Tr[\sigma \log \rho]$ ($\sigma \succeq 0$ fixed; Concave) & $m+k$ LMIs of size $2n\times 2n$ each\\
\texttt{quantum\_rel\_entr} & $(\rho,\sigma)\mapsto \Tr[\rho(\log \rho - \log \sigma)]$ (Convex) & $m$ LMIs of size $(n^2+1)\times (n^2+1)$\\
 & & and $k$ LMIs of size $2n^2\times 2n^2$ each\\
\bottomrule
\end{tabular}
\caption{List of functions available in the package \cvxpade{}. The last column gives the size of the semidefinite representations (here LMI stands for Linear Matrix Inequality, and corresponds to a constraint of the form in \eqref{eq:sdpgenintro}). The parameters $m$ and $k$ control the accuracy of the approximation (see Proposition \ref{prop:errboundscalar}) and $n$ is the size of the matrix arguments.}
\label{tbl:cvxpadefunctions}
\end{table}
Our functions can be combined with existing functions in CVX to solve problems
involving a mixture of constraints modeled with the (operator) relative entropy
cone, linear inequalities, and second-order and semidefinite cone constraints.

\subsection{Key ideas}

We now summarize the main ideas behind our approach to constructing semidefinite approximations
and illustrate them with the central example of the paper, the logarithm. 

\paragraph{Approximating integral representations via quadrature}
The first main idea is to use integral representations of functions as the
basis for approximation.  In general, suppose a concave function $g$ has an
integral representation of the form
\begin{equation}
\label{eq:fintrepgenintro} g(x) = \int_{t}f_t(x)\;d\mu(t),
\end{equation}
where $\mu$ is a positive measure and, for any fixed $t$,  $f_t(x)$ is a
semidefinite representable concave function of $x$. If we approximate the
integral via a quadrature rule with positive weights (see Appendix~\ref{sec:background}), we obtain an
approximation of $g$ as
\[ g(x) \approx \sum_{j=1}^m w_j f_{t_j}(x), \]
which is again semidefinite representable. Integral representations of the form~\eqref{eq:fintrepgenintro} are guaranteed to exist for certain operator concave functions by a result of L\"owner.
In the case of the logarithm, the integral representation is simply
\[ \log(x) = \int_0^1 \frac{x-1}{t(x-1)+1)}\;dt.\] 
For fixed $t$, it turns out that the integrand is itself operator
concave and its matrix hypograph has a semidefinite representation.
Approximating the integral via a quadrature rule (such as Gaussian quadrature)
we obtain an approximation of $\log$ that is operator concave and semidefinite representable.

\paragraph{Using functional equations to improve approximations}
The logarithm also satisfies the functional equation $\log(x^{1/2}) =
\frac{1}{2}\log(x)$, allowing us to express $\log(x)$ in terms of the logarithm
of $\sqrt{x}$. This is helpful because the square root brings points
closer to $x=1$, where the approximations via quadrature are more accurate.
Because the square root is also operator monotone, operator concave, and
semidefinite representable, we can compose our rational approximations obtained
via quadrature with this functional equation. Doing so we obtain improved
approximations that still have all of these desirable properties.  

This
additional idea may seem specific to the logarithm. In fact, there are other operator
monotone and operator concave functions obeying functional equations that
relate the function at a point to the function value at a point closer to
$x=1$.  Moreover the functional equations have appropriate monotonicity and
concavity properties, allowing us to use a similar strategy to obtain improved
approximations. Functions defined as the limits of mean iterations, such as the
arithmetic-geometric mean function of Gauss, have the appropriate properties to
be approximated in this way. 
 
\paragraph{Extending to bivariate matrix functions via perspectives}
We can further extend our semidefinite approximations of matrix concave functions
to certain bivariate matrix functions via a noncommutative notion of the perspective of a function. Given a function 
$g:\RR_{++}\rightarrow\RR$, its \emph{perspective} transform is defined as $(x,y) \in \RR^2_{++}
\mapsto yg(x/y)$. It is a well-known result in convex analysis that if
$g:\RR_{++}\rightarrow \RR$ is concave then its perspective is also concave.
The definition of the perspective transform extends to functions of positive
definite matrices. Given a function $g:\RR_{++}\rightarrow \RR$, its
\emph{noncommutative perspective} is 
$P_{g}: \S_{++}^n\times\S_{++}^n\rightarrow \S^n$ defined by
\begin{equation}
\label{eq:matrixpersp}
 P_g(X,Y) = Y^{1/2}g\left(Y^{-1/2}XY^{-1/2}\right)Y^{1/2}.
\end{equation}
If $X$ and $Y$ are scalars, the noncommutative perspective coincides with the
usual scalar definition of perspective transform. A remarkable property of 
the noncommutative perspective is that it is jointly concave in $(X,Y)$ whenever $g$ is operator concave, i.e.,
\[
P_g\left(\lambda X_1 + (1-\lambda) X_2, \lambda Y_1 + (1-\lambda) Y_2\right) \psd 
\lambda P_g(X_1,Y_1) + (1-\lambda) P_g(X_2,Y_2)
\]
for any $\lambda \in [0,1]$ and $X_1,Y_1,X_2,Y_2 \in \S^n_{++}$, see \cite{effros2009matrix,ebadian2011perspectives,effros2014non}. The semidefinite approximations we construct in this
paper can be suitably \emph{homogenized} to give semidefinite approximations of
the noncommutative perspective, or more precisely of the associated hypograph
cone:
\[
 \left\{ (X,Y,T) \in \S_{++}^n\times\S_{++}^n\rightarrow \S^n : P_g(X,Y) \psd T \right\}.
\]
The noncommutative perspective of the negative logarithm function is known as \emph{operator relative
entropy}~\cite{fujii1989relative}, which we denote by $D_{\op}$:\footnote{We define 
$D_{\op}$ as $D_{\op}(X\|Y) = -P_{\log}(Y,X)$ to match the
conventional order of arguments in information theory.}
\begin{equation}
\label{eq:Dopdef0intro}
D_{\op}(X\|Y) := -X^{1/2} \log\left(X^{-1/2} Y X^{-1/2}\right) X^{1/2}.
\end{equation}
The semidefinite approximations of the scalar logarithm function can be used to
approximate $D_{\op}$. In turn this allows us to get semidefinite
approximations of the quantum relative entropy 
$D(\rho\|\sigma) = \Tr[\rho(\log\rho - \log \sigma)]$.

\subsection{Related work}
\label{sec:introrelated}

\paragraph{Computing the matrix logarithm} The problem of numerically computing
the (matrix) logarithm has a long history in numerical analysis. Among
the most successful methods is the so-called \emph{inverse scaling and squaring},
or \emph{Briggs-Pad\'e}, method (see, e.g.,
\cite{kenney1989condition,dieci1996computational,almohy2012improved}). This method
uses the approximation $\log(X) \approx 2^{k}r_m(X^{1/2^k})$  where $r_m$ is
the $m$th diagonal Pad\'{e} approximant of $\log(x)$ at $x=1$, which turns out to be
precisely the approximation we consider in this paper. The literature on
computing the matrix logarithm via these methods does not seem to investigate
the concavity properties of this approximation method. Our central observation
is that this method for computing matrix logarithm ``preserves'' the concavity
properties of logarithm, and can be modeled using semidefinite programming
constraints.  This in turn leads to efficient algorithms, via semidefinite
programming, for problems much more complex than simply computing the matrix
logarithm (see, e.g.,~\cite{quantumopt}).

\paragraph{Other approximations} A simple approximation for logarithm is
$\log(x) \approx \frac{1}{h}(x^h-1)$, where $0 < h < 1$, with equality when
$h\rightarrow 0$. This can be seen as the combination of a Taylor linearization
$\log(x) \approx x-1$ with the fact that $\log(x^h) = \frac{1}{h} \log(x^h)$.
In previous work by the first two authors \cite{fawzi2015lieb}, it was shown
that this approach can be used to get a semidefinite approximation of the
matrix logarithm and the quantum relative entropy. In general, however, the
quality of this approximation is  relatively poor and is much less accurate than
the rational approximations considered here.  Another idea of approximating the
scalar relative entropy cone via second-order cone programming is considered in
unpublished work by Glineur~\cite{glineuragmapprox}.  The approach taken by
Glineur involves using an approximation for the logarithm via the
arithmetic-geometric-mean iteration, and then giving an approximation of a
convex cone related to the arithmetic-geometric-mean with convex quadratic
inequalities. 

\paragraph{Successive approximation} To make up for the poor approximation
quality of $\log(x) \approx \frac{1}{h} (x^h - 1)$, one method is to
successively refine the linearization point and use, more generally, 
$\log(x) \approx \log(a) + \frac{1}{h}((x/a)^h - 1)$. This is the approach taken by CVX
\cite{cvx}. It requires the solution of multiple second-order cone programs
to update the linearization point. One drawback of this approach, however, is
that it does not generalize to matrices, since there is no natural analogue of
the identity $\log(ax) = \log(a) + \log(x)$ for matrices.

\paragraph{Approximating second-order cone programs with linear programs} The
most prominent example of approximating a family of conic optimization problems
with another, is the work of Ben-Tal and Nemirovski~\cite{ben2001polyhedral},
giving a systematic method to approximate any second-order cone program with
a linear program. The number of linear inequalities in the approximating linear
programs of Ben-Tal and Nemirovski grow logarithmically with $1/\epsilon$ where
$\epsilon$ is a notion of approximation quality. The fundamental construction
underlying this approximation is a description of the regular $2^n$-gon in the
plane as the projection of a higher-dimensional polyhedron with $2n$ facets.
Using this technique Ben-Tal and Nemirovski give a polyhedral approximation to
the exponential cone, via first constructing a second-order cone based
approximation to the exponential cone~\cite[Example 4]{ben2001polyhedral}. This
approximation is based on a degree four truncation of the Taylor series for
$\exp(2^{-k}x)$. Unlike our approximations, this approach works with the
exponential, which is not operator convex and so does not generalize to
matrices.  

\subsection{Outline}

To make the presentation as accessible as possible, we focus first on the case
of the logarithm function (Sections \ref{sec:scalar_log} and
\ref{sec:op_rel_entr}) before explaining the general approach for operator
concave functions (Section \ref{sec:op_concave}).  In Section
\ref{sec:scalar_log} we describe the basic ideas behind our approximations,
focusing on the scalar logarithm and the relative entropy cone.
In Section~\ref{sec:op_rel_entr} 
we state and prove our main result (Theorem~\ref{thm:repKmkn}), giving an explicit family of
semidefinite approximations to the operator relative entropy. We conclude the
section by giving semidefinite approximations of the epigraph of the quantum
relative entropy function.  In Section \ref{sec:op_concave} we explain how our
approach can be used to approximate other operator concave functions. In
Section \ref{sec:numerical_experiments} we present some numerical experiments
to test the accuracy of our approximations and give comparison with the
successive approximation method of CVX. Finally we conclude in Section
\ref{sec:discussion}.

\section{Approximating logarithm}
\label{sec:scalar_log}
In this section we describe the main ingredients for our semidefinite
approximations of the logarithm. For simplicity we restrict ourselves, here, to
the case of scalar logarithm. Nevertheles, our construction remains valid for
matrices---we explain this is in more detail in the following section.  Our
approximation of the logarithm function relies on the following ingredients: an
integral representation of $\log$, Gaussian quadrature, and the following
functional relation satisfied by $\log$: $\log(x) = \frac{1}{h} \log(x^{h})$.

\paragraph{Integral representation} We start with the following integral
representation of the logarithm function 
\begin{equation}
\label{eq:logint}
\log(x) = \int_{1}^{x} \frac{ds}{s} = \int_{0}^{1} f_t(x) dt \quad \text{ where } \quad f_t(x) = \frac{x-1}{t(x-1) + 1}.
\end{equation}
Here,  the second equality comes from the change of variable $s = t(x-1)+1$.
A key property of this integral representation is that for any fixed 
$t \in[0,1]$, the function $x\mapsto f_t(x)$ is concave. (The 
representation~\eqref{eq:logint} thus establishes the concavity of $\log$ in a way that generalizes 
nicely to the setting of matrix functions.) One can easily show that the function $x\mapsto f_t(x)$ is semidefinite
representable:
\begin{equation}
\label{eq:ftlmiscalar}
f_t(x) \geq \tau \quad \iff \quad \begin{bmatrix} x-1-\tau & -\sqrt{t}\tau\\ -\sqrt{t}\tau & 1-t\tau\end{bmatrix} \succeq 0.
\end{equation}

\paragraph{Gaussian quadrature}
To obtain an approximation of $\log$ that retains concavity, we discretize the
integral \eqref{eq:logint} using Gaussian quadrature (see Appendix
\ref{sec:background} for more information about Gaussian quadrature). This gives an approximation of the form
\begin{equation}
\label{eq:logquad}
\log(x) \approx \sum_{j=1}^m w_j f_{t_j}(x),
\end{equation}
where $t_j \in [0,1]$ are the quadrature nodes, and $w_j > 0$ are the quadrature
weights.  We denote by $r_m(x)$ the right-hand side of \eqref{eq:logquad}, 
a rational function whose numerator and denominator have degree $m$:
\begin{equation}
\label{eq:rmdef}
r_m(x) := \sum_{j=1}^m w_j f_{t_j}(x) = \sum_{j=1}^{m}w_j\frac{x-1}{t_j(x-1) + 1}.
\end{equation}
The key property of $r_m$ is that it is concave and semidefinite representable:
this is because it is a nonnegative combination of functions that are each
semidefinite representable (see \eqref{eq:ftlmiscalar}). It is also interesting
to note that the function $r_m$ coincides precisely with the Pad{\'e}
approximant of $\log$ of type $(m,m)$: in particular $r_m$ agrees with the
first $2m+1$ Taylor coefficients of the logarithm function. This has in fact
been already observed, e.g., in~\cite[Theorem 4.3]{dieci1996computational} (see
also Appendix \ref{app:pade} for a proof that works for a more general class of
functions).

\paragraph{Exponentiation} The approximation \eqref{eq:logquad} is best around
$x=1$. A common technique to get good quality approximations when $x$ is
farther away from $1$ is to exploit the following important property of the
logarithm function:
\begin{equation}
\label{eq:proplog}
\log(x) = \frac{1}{h}\log(x^{h}).
\end{equation}
 Note that when $0< h < 1$, $x^h$ is closer to 1 than $x$ is, and thus the
rational approximation~\eqref{eq:logquad} is of better quality at $x^h$ than at
$x$. Taking $h$ of the form $h=1/2^k$ we define:
\begin{equation}
\label{eq:defrmk}
r_{m,k}(x) = 2^k r_m(x^{1/2^k}).
\end{equation}
The approximation $r_{m,k}$ should be understood as a composition of two steps
for a given $x$: (1) take the $2^k$th root of $x$ to bring it closer to 1; and
(2) apply the approximation $r_m$ and scale back by $2^k$ accordingly. One can
show that $r_{m,k}$ is concave and semidefinite representable: indeed it is
known that power functions of the form $x\mapsto x^{1/2^k}$ are concave and
semidefinite representable (in fact second-order cone representable), see
\cite{ben2001lectures}. Since the function $r_m$ is concave, semidefinite
representable, and monotone it easily follows that $r_{m,k}$ is concave
and semidefinite representable. An explicit semidefinite representation appears as
a special case of Theorem \ref{thm:repKmkn} in Section \ref{sec:op_rel_entr}.

\paragraph{Error bounds}
One can derive bounds on the error between $r_{m,k}$ and $\log$. 
Since $r_{m}$ is defined in terms of Gaussian quadrature
applied to the rational function $f_t(x)$, such error bounds can be derived by
studying the Chebyshev coefficients of $t\mapsto f_t(x)$. In fact these can be
computed exactly and lead to the following error bounds.
\begin{proposition}
\label{prop:errboundscalar}
Let $r_{m,k}$ be the function defined in \eqref{eq:defrmk}. Then for any $x > 0$ we have
\[
|r_{m,k}(x) - \log(x)| \leq
2^k
|\sqrt{\kappa} - \sqrt{\kappa^{-1}}|^2 \left(\frac{\sqrt{\kappa} - 1}{\sqrt{\kappa}+1}\right)^{2m-1} \asymp 
4\cdot 4^{-m(k+2)}\log(x)^{2m+1}\quad(k\rightarrow \infty)
\]
where $\kappa = x^{1/2^k}$. 
\end{proposition}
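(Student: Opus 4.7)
My plan is to first use the functional equation $\log(x) = 2^k\log(x^{1/2^k})$ together with the definition $r_{m,k}(x) = 2^k r_m(x^{1/2^k})$ to factor the error as
\[
|r_{m,k}(x) - \log(x)| = 2^k\,|r_m(\kappa) - \log(\kappa)|, \qquad \kappa = x^{1/2^k},
\]
thereby reducing the problem to controlling $|r_m(\kappa)-\log(\kappa)|$ and multiplying by the overall factor $2^k$ at the end.

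Next, I would observe that, by the integral representation \eqref{eq:logint} and the definition \eqref{eq:rmdef}, $r_m(\kappa) - \log(\kappa)$ is exactly the error of $m$-point Gauss--Legendre quadrature on $[0,1]$ applied to the rational function $\phi(t) := f_t(\kappa) = (\kappa-1)/(t(\kappa-1)+1)$. This function has a single simple pole at $t_* = 1/(1-\kappa)$, which lies outside $[0,1]$ whenever $\kappa>0$ and $\kappa\neq 1$, so $\phi$ is analytic on a sufficiently large Bernstein ellipse around $[0,1]$. After the affine change of variable $s=2t-1$ that maps $[0,1]$ to $[-1,1]$, the pole moves to $s_* = (1+\kappa)/(1-\kappa)$, and a short calculation gives
\[
\rho \;:=\; |s_*| + \sqrt{s_*^2-1} \;=\; \frac{\sqrt{\kappa}+1}{|\sqrt{\kappa}-1|}, \qquad \sqrt{s_*^2-1} \;=\; \frac{2\sqrt{\kappa}}{|1-\kappa|}.
\]
The number $\rho^{-1} = |\sqrt{\kappa}-1|/(\sqrt{\kappa}+1)$ is exactly the base of the geometric rate appearing in the proposition.

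The key computation is then to expand $\phi$ in the shifted Chebyshev basis $\{T_n(2t-1)\}$ using the classical identity $\frac{1}{a-s} = \frac{2}{\sqrt{a^2-1}}\sum_{n\ge 0}{}' \rho^{-n} T_n(s)$ (with halved leading term), read off the Chebyshev coefficients $a_n$ of $\phi$ explicitly, and use the fact that $m$-point Gauss--Legendre is exact on polynomials of degree $\le 2m-1$. This truncates the error to a tail starting at $T_{2m}$, and the classical estimate on $|E_m(T_n)|$ summed as a geometric series in $\rho^{-1}$ yields a closed-form bound proportional to $(\sqrt{s_*^2-1})^{-1}\rho^{-(2m-1)}$, up to factors coming from $1-\rho^{-2}$. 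Substituting the formulas for $\sqrt{s_*^2-1}$ and $\rho$ and using the identity $(\sqrt{\kappa}-\sqrt{\kappa^{-1}})^2 = (\kappa-1)^2/\kappa$ collapses everything into the stated form $|\sqrt{\kappa}-\sqrt{\kappa^{-1}}|^2\,(\rho^{-1})^{2m-1}$; multiplying by $2^k$ then gives the first inequality.

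For the asymptotic, I would Taylor-expand $\kappa = x^{1/2^k} = 1 + \log(x)/2^k + O(4^{-k})$, so that $\sqrt{\kappa}-1 \sim \log(x)/(2\cdot 2^k)$, $\sqrt{\kappa}+1\sim 2$, and $|\sqrt{\kappa}-\sqrt{\kappa^{-1}}|^2 \sim \log(x)^2/4^k$. Substituting into the bound, the factors of $2^k$ and $4^k$ combine cleanly to give $4\cdot 4^{-m(k+2)}\log(x)^{2m+1}$, matching the claim. The main obstacle I anticipate is pinning down the correct constant and exponent in the Chebyshev-series error estimate so that one lands exactly on $\rho^{-(2m-1)}$ rather than $\rho^{-2m}$ or $\rho^{-(2m+1)}$; this likely requires either the exact contour-integral representation of the Gauss quadrature remainder for $1/(a-s)$ or a careful use of the explicit formula for the Gauss quadrature error on Chebyshev polynomials, rather than a generic upper bound.
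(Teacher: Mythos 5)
Your overall strategy is exactly the paper's: reduce to $2^k\lvert r_m(\kappa)-\log(\kappa)\rvert$ via the functional equation, interpret $r_m(\kappa)-\log(\kappa)$ as the $m$-point Gauss--Legendre error applied to $t\mapsto f_t(\kappa)$, compute the Chebyshev expansion of this integrand in closed form (your identity for $1/(a-s)$ is precisely the generating-function computation in Appendix~B, yielding coefficients $2(\sqrt{\kappa}-1/\sqrt{\kappa})\rho^{-n}$ with $\rho^{-1}=\lvert\sqrt{\kappa}-1\rvert/(\sqrt{\kappa}+1)$), kill the first $2m$ terms by exactness of the quadrature, and sum the tail; the asymptotic via $\kappa=e^{2^{-k}\log x}$ is also handled the same way (the paper writes it with $\sinh$ and $\tanh$).

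The one genuine gap is the step you yourself flag: landing on exactly $\lvert\sqrt{\kappa}-\sqrt{\kappa^{-1}}\rvert^{2}\,\rho^{-(2m-1)}$ with constant $1$. A generic tail bound $\sum_{n\ge 2m}\lvert a_n\rvert\cdot 2$ gives $4\lvert\sqrt{\kappa}-1/\sqrt{\kappa}\rvert\,\rho^{-2m}/(1-\rho^{-1})$, which is the paper's weaker bound~\eqref{eq:nu-bound1} and does \emph{not} simplify to the stated form (it is off by a factor $2\sqrt{\kappa}/(\sqrt{\kappa}+1)$). The resolution is not a contour-integral remainder formula but a parity argument: the Lebesgue measure on $[0,1]$ is invariant under $t\mapsto 1-t$, and so is the Gauss--Legendre rule, so after centering to $[-1,1]$ both the exact integral and the quadrature sum annihilate every odd-degree Chebyshev polynomial. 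Hence only the even-indexed tail terms $n=2m,2m+2,\dots$ contribute, the tail is a geometric series in $\rho^{-2}$, and $4/(1-\rho^{-2})=(\sqrt{\kappa}+1)^2/\sqrt{\kappa}$ combines with one factor of $\rho^{-1}$ to convert the prefactor $\lvert\sqrt{\kappa}-1/\sqrt{\kappa}\rvert$ into its square, giving exactly $\lvert\sqrt{\kappa}-\sqrt{\kappa^{-1}}\rvert^{2}\rho^{-(2m-1)}$ (this is the passage from~\eqref{eq:nu-bound1} to~\eqref{eq:nu-bound2} in Proposition~\ref{prop:err-monotone}). With that observation inserted, your argument closes; everything else in your sketch, including the asymptotics $\sinh^2(2z)\tanh^{2m-1}(z)=4z^{2m+1}+O(z^{2m+3})$, matches the paper.
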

\begin{proof}
See Appendix \ref{sec:log-err}.
\end{proof}
By making appropriate choices of $m$ and $k$ in Proposition~\ref{prop:errboundscalar}, we obtain a result
showing how the size of our representation grows as the approximation quality improves.
\begin{theorem}
\label{thm:epserrorscalar}
For any (fixed) $a>1$ and any $\epsilon>0$, there exists a function 
$r$ such that $|r(x) - \log(x)|\leq \epsilon$ for all $x\in [1/a,a]$,
and $r$ has a semidefinite representation of size $O(\sqrt{\log_e(1/\epsilon)})$. 
\end{theorem}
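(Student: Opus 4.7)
The plan is to apply Proposition~\ref{prop:errboundscalar} with a balanced choice of $m$ and $k$, then read off the size of the resulting semidefinite representation. First, I would set $r = r_{m,k}$ for parameters to be chosen and work from the non-asymptotic bound
\[
|r_{m,k}(x) - \log(x)| \leq 2^k |\sqrt{\kappa} - \sqrt{\kappa^{-1}}|^2 \left(\frac{\sqrt{\kappa}-1}{\sqrt{\kappa}+1}\right)^{2m-1}, \qquad \kappa = x^{1/2^k}.
\]
For $x\in[1/a,a]$ we have $|\log x|\leq \log a$, so writing $\kappa = e^{y}$ with $|y|=|\log x|/2^k\leq (\log a)/2^k$, I would use the elementary estimates $|\sqrt{\kappa}-\sqrt{\kappa^{-1}}|^2 = 2(\cosh y - 1)\leq 2y^2$ and $\left|\frac{\sqrt{\kappa}-1}{\sqrt{\kappa}+1}\right| = \tanh(y/4)\leq |y|/4$, both valid whenever $2^k\geq \log a$. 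Plugging in $|y|\leq (\log a)/2^k$ collapses the bound to
\[
|r_{m,k}(x) - \log(x)| \leq C_a\,(\log a)^{2m+1}\, 4^{-m(k+2)},
\]
for an absolute constant $C_a$. This is exactly the asymptotic predicted by Proposition~\ref{prop:errboundscalar}, but made uniform over $[1/a,a]$ and $k\geq \log_2\log a$.

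Next, I would balance $m$ and $k$. Taking logarithms, the inequality $C_a (\log a)^{2m+1} 4^{-m(k+2)} \leq \epsilon$ becomes
\[
m(k+2)\log 4 \;\geq\; \log(1/\epsilon) + (2m+1)\log\log a + \log C_a.
\]
Since $a$ is fixed, $\log\log a$ is an absolute constant, so the right-hand side is $\log(1/\epsilon) + O(m)$. Setting $k = m$ I get a quadratic condition $m^2 \log 4 \geq \log(1/\epsilon) + O(m)$, which is satisfied by $m = \lceil c_a \sqrt{\log(1/\epsilon)}\rceil$ for a suitable constant $c_a$ depending only on $a$. With this choice both $m$ and $k$ are $O(\sqrt{\log_e(1/\epsilon)})$.

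Finally, I would count the size of the semidefinite representation of $r_{m,k}(x) = 2^k r_m(x^{1/2^k})$. The nested square root $x\mapsto x^{1/2^k}$ is representable by chaining $k$ $2\times 2$ LMIs of the form $\begin{bmatrix} 1 & z_{j+1}\\ z_{j+1} & z_j\end{bmatrix}\succeq 0$, and $r_m = \sum_{j=1}^m w_j f_{t_j}$ is representable with $m$ copies of the $2\times 2$ LMI in~\eqref{eq:ftlmiscalar}. Since $r_m$ is monotone in addition to being concave, composing with $x^{1/2^k}$ preserves semidefinite representability (this is exactly the construction described after \eqref{eq:defrmk}, and formalized in the forthcoming Theorem~\ref{thm:repKmkn}). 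The total representation size is $O(m+k) = O(\sqrt{\log_e(1/\epsilon)})$, as claimed.

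The only real subtlety is keeping track of constants so that the $(\log a)^{2m+1}$ factor does not spoil the $\sqrt{\log(1/\epsilon)}$ asymptotic. Because $\log\log a$ is a constant for fixed $a$, this factor contributes only an $O(m)$ term on the logarithmic scale, which is absorbed by the dominant $m^2$ term coming from $m(k+2)\log 4$ with $k=m$. Beyond this, the argument is a routine balancing of parameters followed by a representation-size bookkeeping, and no essentially hard step remains.
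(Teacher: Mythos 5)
Your proposal is correct and follows essentially the same route as the paper's proof in Appendix~\ref{sec:log-err}: start from the non-asymptotic bound of Proposition~\ref{prop:errboundscalar}, use the elementary $\sinh$/$\tanh$ estimates to get a uniform bound of the form $C(\log a)^{2m+1}4^{-m(k+2)}$ on $[1/a,a]$, balance $m\asymp k\asymp\sqrt{\log(1/\epsilon)}$, and count the $O(m+k)$ representation size via Theorem~\ref{thm:repKmkn}. The only differences are cosmetic (the paper splits $k=k_1+k_2$ with $k_1$ an $a$-dependent constant and $k_2=2m$, whereas you take $k=m$ subject to $k\geq\log_2\log a$).
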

\begin{proof}
See Appendix \ref{sec:log-err}.
\end{proof}
The main point here is that it is the combination of Pad\'e approximants with successive square rooting that allows us to get 
a rate of $O(\sqrt{\log(1/\epsilon)})$. Using either technique individually gives us a rate of $O(\log(1/\epsilon))$. Figure \ref{fig:plot_scalar_apx_error} shows the error $|r_{m,k}(x) - \log(x)|$ for different choices of $(m,k)$.

\begin{figure}[ht]
\centering
\includegraphics[width=7cm]{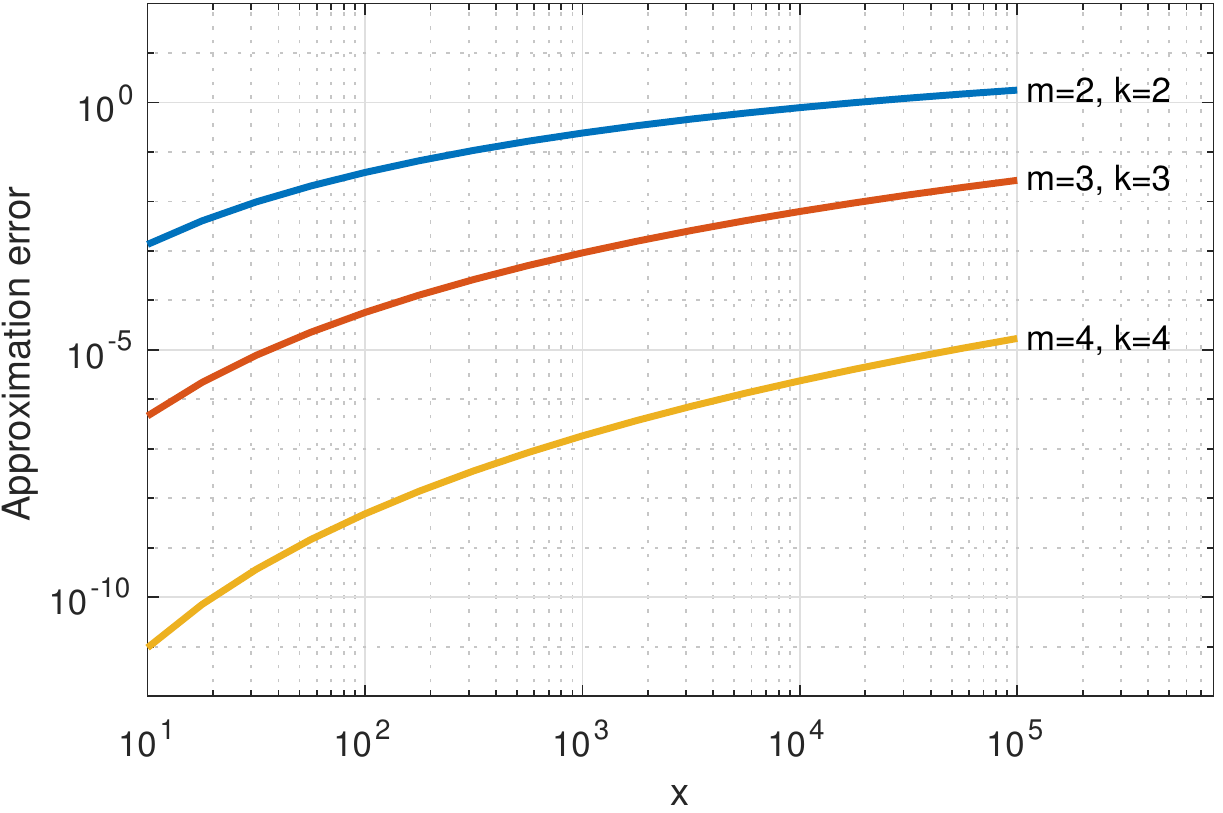}
\quad \quad
\includegraphics[width=7cm]{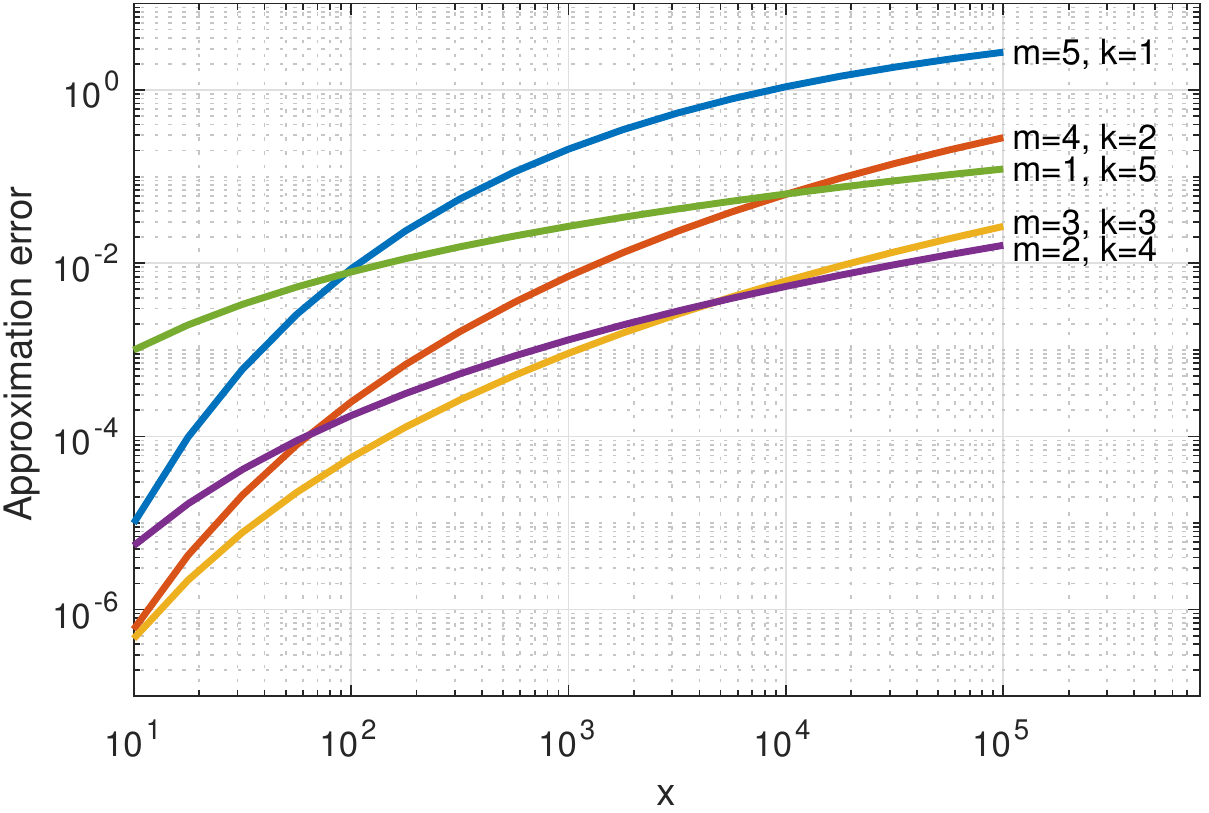}
\caption{Plot of the error $|r_{m,k}(x) - \log(x)|$ for different choices of $(m,k)$. Left: $m=k$. Right: pairs 
$(m,k)$ such that $m+k=6$.
\label{fig:plot_scalar_apx_error}}
\end{figure}

\paragraph{The (scalar) relative entropy cone} The relative entropy is defined
as the perspective function of the negative logarithm: $(x,y) \in \RR_{++}
\times \RR_{++}\mapsto x\log(x/y)$. The epigraph of this function is known as
the \emph{relative entropy cone}:
\[
\Kre := \cl \left\{ (x,y,\tau) \in \RR_{++} \times \RR_{++} \times \RR : x\log(x/y) \leq \tau \right\}.
\]
Using the perspective of $r_{m,k}$ one can obtain a semidefinite approximation
of $\Kre$.  Let
\[
K_{m,k} := \{ (x,y,t) \in \RR^2_{++} \times \RR : xr_{m,k} (x/y) \leq t \}.
\]
The following theorem gives an approximation error for the cone $K_{m,k}$.
\begin{theorem}[{Approximation error for $\Kre$}] 
\label{thm:Kreapprox}
Let $a > 1$ and $\epsilon > 0$. Then there exist $m$ and $k$ with 
$m+k = O(\sqrt{\log_{e}(1/\epsilon)})$ such that:
\begin{enumerate}
	\item if $0<a^{-1}y\leq x \leq ay$ and $(x,y,t)\in \Kre$ then $(x,y,t+x\epsilon)\in K_{m,k}$
	\item if $0<a^{-1}y\leq x \leq ay$ and $(x,y,t)\in K_{m,k}$ then $(x,y,t+x\epsilon)\in \Kre$.
\end{enumerate}
\end{theorem}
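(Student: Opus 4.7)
My plan is to reduce this essentially to a pointwise statement about the scalar approximation $r_{m,k}$ of $\log$, and then let multiplication by $x$ propagate the error additively.

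First, I would invoke Theorem~\ref{thm:epserrorscalar} to obtain integers $m,k$ with $m+k = O(\sqrt{\log_e(1/\epsilon)})$ such that the approximation $r_{m,k}$ from \eqref{eq:defrmk} satisfies
\[ |r_{m,k}(u) - \log(u)| \leq \epsilon \quad \text{for all } u \in [1/a, a]. \]
(Here I am implicitly taking $\epsilon$ to be the approximation parameter and $a$ the interval radius from Theorem~\ref{thm:epserrorscalar}.) With this $(m,k)$ fixed, define $K_{m,k}$ as in the theorem statement; note that $r_{m,k}$ is concave and semidefinite representable, so $K_{m,k}$ is a genuine convex set, though I do not need this convexity for the error bound itself.

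Now for the main step, suppose $(x,y,t)$ satisfies $0 < a^{-1}y \leq x \leq ay$. Then $x/y \in [1/a,a]$, so the pointwise bound gives
\[ \log(x/y) - \epsilon \;\leq\; r_{m,k}(x/y) \;\leq\; \log(x/y) + \epsilon. \]
Multiplying through by $x > 0$ yields
\[ x\log(x/y) - x\epsilon \;\leq\; x r_{m,k}(x/y) \;\leq\; x\log(x/y) + x\epsilon. \]
Both inclusions in the theorem now follow immediately. For (1): if $(x,y,t) \in \Kre$ so that $x\log(x/y) \leq t$, then $x r_{m,k}(x/y) \leq x\log(x/y) + x\epsilon \leq t + x\epsilon$, so $(x,y,t+x\epsilon) \in K_{m,k}$. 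For (2): if $(x,y,t) \in K_{m,k}$ so that $x r_{m,k}(x/y) \leq t$, then $x\log(x/y) \leq x r_{m,k}(x/y) + x\epsilon \leq t + x\epsilon$, so $(x,y,t+x\epsilon) \in \Kre$.

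There is no real obstacle here: all the work is in Theorem~\ref{thm:epserrorscalar} (and hence in Proposition~\ref{prop:errboundscalar}), and the perspective operation passes through the uniform error cleanly because $x \geq 0$. The only small point of care is that $\Kre$ is defined via a closure while $K_{m,k}$ is defined by a strict-positivity condition on $x,y$, but the hypothesis $a^{-1}y \leq x \leq ay$ with $y > 0$ places us safely in the interior of both domains, so closure plays no role.
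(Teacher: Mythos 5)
Your proof is correct and follows essentially the same route as the paper's: invoke Theorem~\ref{thm:epserrorscalar} to get the uniform bound $|r_{m,k}(u)-\log(u)|\leq\epsilon$ on $[1/a,a]$, then multiply by $x>0$ to transfer the error additively to the perspective. The only (harmless) implicit step, which the paper also relies on, is that the function $r$ produced by Theorem~\ref{thm:epserrorscalar} is indeed of the form $r_{m,k}$ with $m+k=O(\sqrt{\log_e(1/\epsilon)})$, as shown in its proof in Appendix~\ref{sec:log-err}.
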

\begin{proof}
The proof is straightforward using Theorem \ref{thm:epserrorscalar}. 
Theorem \ref{thm:epserrorscalar} says that there exist $(m,k)$ with 
$m+k = O(\sqrt{\log_e(1/\epsilon)})$ such that 
$|r_{m,k}(x) - \log(x)| < \epsilon$ on $[a^{-1},a]$. 
Now, if $(x,y,t) \in \Kre$ this means that $x\log(x/y) \leq t$. Since $x/y \in [a^{-1},a]$, we get that
\[
xr_{m,k}(x/y) \leq x(\log(x/y) + \epsilon) \leq t + x\epsilon
\]
which means that $(x,y,t+x\epsilon) \in K_{m,k}$. The other direction is similar.
\end{proof}
A semidefinite representation of $K_{m,k}$ appears as the case $n=1$ of Theorem
\ref{thm:repKmkn} to follow. Note that, in this scalar case, the approximation
involves only $2\times 2$ linear matrix inequalities and thus can be formulated
using second-order cone programming.

\section{Operator concavity, noncommutative perspectives and the operator relative entropy cone}
\label{sec:op_rel_entr}
The main goal of this section is to show that the ideas presented in the
previous section are still valid when working with matrices. The main result of
this section (and of the paper) is Theorem \ref{thm:repKmkn}, which gives an
explicit semidefinite programming approximation of the \emph{operator relative
entropy cone}, a matrix generalization of the relative entropy cone.

We begin by 
showing that the approximation
$r_{m,k}$, defined in \eqref{eq:defrmk}, is operator concave, just like the
logarithm function.  We then show how to use the noncommutative perspective
of $r_{m,k}$ to approximate the operator relative entropy. This leads to our
explicit semidefinite approximation of the operator relative entropy cone. 
We then show how this can be used to approximate the quantum relative entropy.   

\subsection{Operator concavity of logarithm and its approximation}

We have already mentioned in the introduction
that the logarithm function is \emph{operator concave}. The next proposition will
allow us to show this, as well as the operator concavity of the rational
function $r_m$ that we considered in the previous section.
\begin{proposition}
\label{prop:f-op-concave}
For $t \in [0,1]$ let $f_t$ be the rational function defined in
\eqref{eq:logint}. Then $f_t$ is operator concave. In fact we have the
following semidefinite representation of its matrix hypograph:
\begin{equation}
\label{eq:sdprepft1}
 f_t(X) \psd T\;\;\textup{and}\;\; X \pd 0 \quad\iff\quad 
\begin{bmatrix} X-I & 0\\0 & I\end{bmatrix} - 
\begin{bmatrix} T & \sqrt{t}T\\\sqrt{t}T & t T\end{bmatrix} \psd 0 \;\;\textup{and}\;\; X \pd 0.
\end{equation}
\end{proposition}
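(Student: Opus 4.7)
The plan is to prove the two claims separately: operator concavity of $f_t$ by direct calculation, and the semidefinite representation of its matrix hypograph by a Schur complement manipulation combined with an operator-monotone inverse argument.

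For operator concavity, I would start from the partial fraction identity $f_t(x)=\frac{1}{t}-\frac{1}{t(tx+(1-t))}$ for $t\in(0,1]$ (with $f_0(x)=x-1$ trivially operator concave). In matrix form this reads $f_t(X)=\frac{1}{t}I-\frac{1}{t}(tX+(1-t)I)^{-1}$, and operator concavity follows immediately from the operator concavity of $Y\mapsto -Y^{-1}$ on $\S_{++}^n$ precomposed with the positive-definite-preserving affine map $X\mapsto tX+(1-t)I$.

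For the SDR, let $M$ denote the block matrix on the right of \eqref{eq:sdprepft1}. Both sides of the claimed equivalence force $I-tT\succ 0$: on the right this is visible from the $(2,2)$ block of $M$, and on the left it follows from $f_t(X)\prec \frac{1}{t}I$, which is immediate from the decomposition above. I can therefore Schur-complement $M$ against its positive definite $(2,2)$ block to obtain
\[
M\succeq 0 \iff X-I-T \succeq tT(I-tT)^{-1}T.
\]
Since $T$ commutes with $(I-tT)^{-1}$, the right-hand side plus $T$ telescopes, $T+tT(I-tT)^{-1}T=T(I-tT)^{-1}[(I-tT)+tT]=T(I-tT)^{-1}$, and the condition simplifies to $X-I\succeq T(I-tT)^{-1}$.

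The main obstacle is matching this to $f_t(X)\succeq T$, since $X$ and $T$ need not commute in general, so naive manipulations such as ``multiply both sides by $t(X-I)+I$'' are unavailable. I would instead observe that $f_t$ is an operator monotone bijection with inverse $f_t^{-1}(u)=1+u(1-tu)^{-1}$, obtained by solving $u=\tfrac{x-1}{t(x-1)+1}$ for $x$. Rewriting $f_t^{-1}(u)=1-\frac{1}{t}+\frac{1}{t(1-tu)}$ shows that $f_t^{-1}$ is again operator monotone on $\{u:1-tu\succ 0\}$ by the same decomposition argument as in the first step. Because both $f_t$ and $f_t^{-1}$ are operator monotone and mutually inverse, $f_t(X)\succeq T$ is equivalent to $X\succeq f_t^{-1}(T)=I+T(I-tT)^{-1}$, matching the simplified Schur complement condition exactly. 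Boundary cases where $I-tT$ is only PSD-but-singular are then handled by a standard continuity argument, completing the proof.
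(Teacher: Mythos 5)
Your proof is correct, but it takes a genuinely different route from the paper's. The paper writes $f_t(X) = (I/t) - (I/t)\left[(X-I)+(I/t)\right]^{-1}(I/t)$ and invokes a one-line Schur-complement lemma (Proposition~\ref{prop:blah}): for $A+B\succ 0$, $B - B(A+B)^{-1}B \succeq T$ iff $\begin{bmatrix} A & 0\\ 0 & B\end{bmatrix} - \begin{bmatrix} T & T\\ T & T\end{bmatrix}\succeq 0$; applying this with $A = X-I$, $B=I/t$ and a congruence by $\diag(I,\sqrt{t}I)$ yields \eqref{eq:sdprepft1} directly, with operator concavity then read off from convexity of the hypograph. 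You instead Schur-complement the candidate LMI against its $(2,2)$ block $I-tT$, reduce to $X\succeq I + T(I-tT)^{-1} = f_t^{-1}(T)$, and pass back to $f_t(X)\succeq T$ via operator monotonicity of both $f_t$ and $f_t^{-1}$. That chain is sound (your telescoping identity and the formula for $f_t^{-1}$ check out), but note one small imprecision: $M\succeq 0$ only forces $I-tT\succeq 0$, not $I-tT\succ 0$, so the strict positivity you need for the Schur complement is genuinely only available on the left-hand side; your closing perturbation does repair this, since replacing $T$ by $T-\epsilon I$ adds $\epsilon$ times the positive semidefinite matrix $\begin{bmatrix}1 & \sqrt{t}\\ \sqrt{t} & t\end{bmatrix}\otimes I$ to $M$ and makes $I-tT_\epsilon\succ 0$. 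The trade-off is that your route needs the extra inverse-function machinery and the limiting argument, whereas the paper's identity generalizes verbatim to the noncommutative perspective $P_{f_t}(X,Y)$ by replacing $I$ with $Y$ (Proposition~\ref{prop:f-joint-concave}), which is exactly what Theorem~\ref{thm:repKmkn} requires; the inverse-function trick does not transfer as cleanly to that two-variable setting. Your direct proof of operator concavity from operator convexity of $Y\mapsto Y^{-1}$ is fine and uses the same decomposition of $f_t$ as the paper.
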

\begin{proof}
The fact that $f_t$ is operator concave will follow directly once we establish \eqref{eq:sdprepft1}, 
since it will show that the matrix hypograph of $f_t$ is a convex set. 
The proof of~\eqref{eq:sdprepft1} is based on Schur complements, and is given in Appendix~\ref{app:ffplus-sdp}.
\end{proof}
We can now directly see that $\log$ is operator concave, 
since it is a nonnegative (integral) combination of the $f_t$. The same is also true for $r_m$,
since it is defined as a finite nonnegative combination of the $f_t$. Since $f_t$ is semidefinite representable, 
we can also get a semidefinite representation of the matrix hypograph of $r_m$, i.e., $\{(Y,U): r_m(Y) \psd U\}$.
This is the special case of Theorem~\ref{thm:repKmkn} with $k=0, U=-T$ and $X=I$.

\paragraph{Operator concavity of $r_{m,k}$} 
In Section \ref{sec:scalar_log} we saw that one can get an improved approximation of $\log$ 
by considering $r_{m,k}(x) := 2^k r_m(x^{1/2^k})$. We now show that $r_{m,k}$ is also operator concave. 
The argument directly generalizes the proof that $r_{m,k}$ is concave (in the usual sense). 
For the generalization we need the notion of \emph{operator monotonicity}.  A
function $g:\RR_{++}\rightarrow \RR$ is called \emph{operator monotone} if whenever 
$X\psd Y$ then $g(X) \psd g(Y)$, where $X,Y \in \S^n_{++}$ for any $n$.
\begin{proposition}
\label{prop:rmkopconcave}
The function $r_{m,k}$ is operator concave.
\end{proposition}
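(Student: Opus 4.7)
The plan is to exhibit $r_{m,k}$ as a scalar multiple of a composition of two operator concave, operator monotone functions, and then invoke the standard fact that such a composition is operator concave. To carry this out I first need operator monotonicity of $r_m$ itself, which is not yet recorded in the excerpt.

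Step 1: Show that each $f_t$, and hence $r_m = \sum_{j=1}^m w_j f_{t_j}$, is operator monotone on $\mathbb{R}_{++}$. For $t=0$ this is trivial since $f_0(x)=x-1$. For $t\in(0,1]$, rewrite
\[
f_t(x) \;=\; \frac{1}{t} - \frac{1}{t\bigl(t(x-1)+1\bigr)} \;=\; \frac{1}{t} - \frac{1}{t^2 x + t(1-t)},
\]
and appeal to the classical fact that $x\mapsto -1/(ax+b)$ is operator monotone on $\mathbb{R}_{++}$ whenever $a>0$ and $b\geq 0$ (a direct consequence of operator monotonicity of $x\mapsto -1/x$, which is equivalent to operator convexity of $x\mapsto 1/x$). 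Since $w_j>0$ and positive combinations of operator monotone functions are operator monotone, $r_m$ is operator monotone. Operator concavity of $r_m$ has already been established via Proposition~\ref{prop:f-op-concave}.

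Step 2: Invoke the L\"owner--Heinz theorem to note that $\phi_k(x):=x^{1/2^k}$ is both operator concave and operator monotone on $\mathbb{R}_{++}$, and maps $\mathbb{R}_{++}$ into $\mathbb{R}_{++}$, so that the composition $r_m\circ\phi_k$ is well-defined on $\S^n_{++}$.

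Step 3: Apply the standard composition principle: if $g_1$ is operator concave and operator monotone and $g_2$ is operator concave, then $g_1\circ g_2$ is operator concave. The one-line proof is that for $X_1,X_2 \pd 0$ and $\lambda\in[0,1]$,
\[
g_1\!\bigl(g_2(\lambda X_1+(1-\lambda)X_2)\bigr)\psd g_1\!\bigl(\lambda g_2(X_1)+(1-\lambda)g_2(X_2)\bigr)\psd \lambda g_1(g_2(X_1))+(1-\lambda)g_1(g_2(X_2)),
\]
where the first inequality uses operator concavity of $g_2$ together with operator monotonicity of $g_1$, and the second uses operator concavity of $g_1$. Applying this with $g_1=r_m$ and $g_2=\phi_k$ shows that $x\mapsto r_m(x^{1/2^k})$ is operator concave. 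Scaling by the positive constant $2^k$ preserves operator concavity, so $r_{m,k}=2^k\, r_m\circ\phi_k$ is operator concave, as claimed.

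The only genuinely new work is Step 1 (operator monotonicity of $f_t$); Steps 2 and 3 are textbook facts. I expect no real obstacles, but care is needed to verify that the range of $\phi_k$ lies inside the domain $\mathbb{R}_{++}$ of $r_m$ so that the composition is meaningful on $\S^n_{++}$, and that the rewriting of $f_t$ in Step 1 genuinely falls into the shape $-1/(ax+b)$ with $a>0,b\geq 0$ for every $t\in(0,1]$, which it does since $t^2>0$ and $t(1-t)\geq 0$.
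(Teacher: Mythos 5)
Your proof is correct and follows essentially the same route as the paper: establish that $f_t$ (hence $r_m$) is operator monotone via the operator monotonicity of $x\mapsto -1/(ax+b)$, invoke operator concavity and monotonicity of $x\mapsto x^{1/2^k}$, and apply the standard composition principle for an operator concave and monotone outer function with an operator concave inner function. The only difference is that you spell out the details (the explicit rewriting of $f_t$ and the two-line proof of the composition lemma) that the paper leaves as remarks.
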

\begin{proof}
One can show that the functions $f_t$, for each fixed $t \in (0,1)$, are
operator monotone in addition to being operator concave: this follows from the
fact that $X \psd Y \pd 0 \implies X^{-1} \nsd Y^{-1}$. Since $r_m$ is a
nonnegative combination of the $f_t$ it is also operator monotone and operator
concave. It is well-known that the power functions $x\mapsto x^{1/2^k}$ are
operator concave, see e.g., \cite{carlen-notes}. Finally it is not hard to show
that the composition of an operator concave and monotone function, with an
operator concave function, yields an operator concave function. Thus this proves
operator concavity of $r_{m,k}$.
\end{proof}
We will see, by setting $X=I$ and $U=-T$ in Theorem~\ref{thm:repKmkn} to follow, how 
to get an explicit semidefinite representation of the matrix hypograph, $\{(Y,U): r_{m,k}(Y) \psd U\}$, of $r_{m,k}$. 

\subsection{Approximating the operator relative entropy cone}

Recall that the operator relative entropy is the noncommutative perspective of the negative logarithm function:
\begin{equation}
\label{eq:Dopdef1}
D_{\op}(X\|Y) := -X^{1/2} \log\left(X^{-1/2} Y X^{-1/2}\right) X^{1/2}.
\end{equation}
We know that $D_{\op}$ is jointly matrix concave in $(X,Y)$. In particular, this means that the epigraph cone associated to
$D_{\op}$ is a convex cone:
\begin{equation}
	\label{eq:Kren2}
	\Kre^n = \cl\left\{(X,Y,T)\in \S_{++}^n\times \S_{++}^n\times \S^n\;:\;
D_{\op}(X\|Y) \nsd T\right\}.
\end{equation}
We saw, in Proposition \ref{prop:rmkopconcave}, that $r_{m,k}$ is operator concave. It thus
follows that the noncommutative perspective
of $r_{m,k}$ is jointly concave. Our approximation of the cone $\Kre^n$ will be
the epigraph cone of $-P_{r_{m,k}}$, the noncommutative perspective of
$-r_{m,k}$. We will denote this cone by $K_{m,k}^n$:
\begin{equation}
\label{eq:Kmkn}
K_{m,k}^n = \left\{(X,Y,T)\in \S_+^n\times \S_+^n\times \S^n\;:\; -P_{r_{m,k}}(Y,X) \nsd T\right\}
\end{equation}
where
\[
P_{r_{m,k}}(Y,X) := X^{1/2}r_{m,k}\left(X^{-1/2}YX^{-1/2}\right)X^{1/2}.
\]
The next theorem, which is the main result of this paper, gives an explicit semidefinite representation of the cone \eqref{eq:Kmkn}.
\begin{theorem}[Main: semidefinite approximation of $\Kre^n$]
\label{thm:repKmkn}
The cone $K_{m,k}^n$ defined in \eqref{eq:Kmkn} has the following semidefinite description:
\begin{equation}
\label{eq:Kmknsdp}
\begin{array}{c}
(X,Y,T) \in K_{m,k}^n\\
\rotatebox{90}{$\Longleftrightarrow$}\\
\exists T_1,\dots,T_m,Z_0,\dots,Z_{k} \in \S^n \text{ s.t. }
\left\{
\begin{array}{ll}
Z_0 = Y, & 
\begin{bmatrix} Z_{i} & Z_{i+1}\\ Z_{i+1} & X\end{bmatrix} \succeq 0 \; (i=0,\dots,k-1)\\[0.3cm]
\displaystyle\sum_{j=1}^m w_j T_j = -2^{-k} T, & 
\begin{bmatrix} Z_k - X - T_j & -\sqrt{t_j} T_j\\ -\sqrt{t_j} T_j & X -  t_j T_j \end{bmatrix} \succeq 0\\
& \qquad \qquad \qquad (j=1,\dots,m)
\end{array}
\right.
\end{array}
\end{equation}
where $w_j$ and $t_j$ ($j=1,\dots,m$) are the weights and nodes for the $m$-point Gauss-Legendre quadrature on the interval $[0,1]$.
\end{theorem}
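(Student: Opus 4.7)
The plan is to reduce both sides of the claimed equivalence to familiar objects via a single congruence by $X^{-1/2}$ and then assemble them with operator monotonicity. Throughout, write $W_i := X^{-1/2}Z_iX^{-1/2}$ and $S_j := X^{-1/2}T_jX^{-1/2}$; I will work in the interior $X,Y\pd 0$ and rely on continuity to cover the boundary of $K_{m,k}^n$.

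First I interpret the chain LMIs. Congruence of $\begin{bmatrix} Z_i & Z_{i+1}\\ Z_{i+1} & X\end{bmatrix}\psd 0$ by $\diag(X^{-1/2},X^{-1/2})$ yields $\begin{bmatrix} W_i & W_{i+1}\\ W_{i+1} & I\end{bmatrix}\psd 0$, which, by Schur complement against the strictly positive $(2,2)$ block, is equivalent to $W_{i+1}^2\nsd W_i$. Since $W_{i+1}$ is Hermitian, one has $W_{i+1}\nsd |W_{i+1}|=(W_{i+1}^2)^{1/2}$, and operator monotonicity of the square root on $\psdcone{n}$ gives $|W_{i+1}|\nsd W_i^{1/2}$. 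Iterating, and invoking operator monotonicity of each fractional power $x\mapsto x^{1/2^j}$, I conclude $W_k\nsd W_0^{1/2^k}=(X^{-1/2}YX^{-1/2})^{1/2^k}$.

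Second, the same congruence converts each ``$f_t$-block'' $\begin{bmatrix} Z_k-X-T_j & -\sqrt{t_j}T_j\\ -\sqrt{t_j}T_j & X-t_jT_j\end{bmatrix}\psd 0$ precisely into the LMI of Proposition~\ref{prop:f-op-concave} applied to $(W_k,S_j)$, so it is equivalent to $f_{t_j}(W_k)\psd S_j$. Taking the positive Gauss-Legendre combination and using the scalar equation $\sum_j w_j T_j=-2^{-k}T$ gives $r_m(W_k)=\sum_j w_j f_{t_j}(W_k)\psd \sum_j w_j S_j=-2^{-k}X^{-1/2}TX^{-1/2}$. Combining with the first step via operator monotonicity of $r_m$ on $\psdcone{n}$ (established in the proof of Proposition~\ref{prop:rmkopconcave}), I obtain $r_m(W_0^{1/2^k})\psd r_m(W_k)\psd -2^{-k}X^{-1/2}TX^{-1/2}$; conjugating back by $X^{1/2}$ and multiplying by $2^k$ yields $P_{r_{m,k}}(Y,X)\psd -T$, i.e.\ $(X,Y,T)\in K_{m,k}^n$. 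This is the ``$\Leftarrow$'' direction.

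For the converse, given $(X,Y,T)\in K_{m,k}^n$, I make the natural choice $Z_i:=X^{1/2}W_0^{1/2^i}X^{1/2}$, which satisfies each chain LMI with equality, and then set $T_j:=X^{1/2}\bigl(f_{t_j}(W_k)-\Delta\bigr)X^{1/2}$ with a single PSD slack $\Delta$ chosen (uniformly in $j$, using $\sum_j w_j=1$ for Gauss-Legendre on $[0,1]$) so that $\sum_j w_jT_j=-2^{-k}T$ forces $\Delta=r_m(W_k)+2^{-k}X^{-1/2}TX^{-1/2}$; the hypothesis $-P_{r_{m,k}}(Y,X)\nsd T$ is exactly what guarantees $\Delta\psd 0$, and since each $S_j\nsd f_{t_j}(W_k)$, the second LMIs are satisfied by Proposition~\ref{prop:f-op-concave}. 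The main obstacle I expect is the step $W_{i+1}^2\nsd W_i\Rightarrow W_{i+1}\nsd W_i^{1/2}$, because $W_{i+1}$ is a priori only Hermitian; this is bridged by the one-line spectral fact $W_{i+1}\nsd|W_{i+1}|$ combined with operator monotonicity of the square root. Everything else amounts to careful bookkeeping with Schur complements, Proposition~\ref{prop:f-op-concave}, and the monotonicity of $r_m$ and of $x\mapsto x^{1/2^j}$.
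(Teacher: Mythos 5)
Your proof is correct and follows essentially the same route as the paper's: the chain of $2\times 2$ blocks encodes $Z_k \nsd X\#_{2^{-k}}Y$ (you re-derive this via congruence by $X^{-1/2}$ and operator monotonicity of the square root, which is exactly the mechanism behind the cited construction), the remaining blocks encode $P_{f_{t_j}}(Z_k,X)\psd T_j$ via Proposition~\ref{prop:f-op-concave}, and monotonicity of $r_m$ glues the two. The explicit choice of $Z_i$ and $T_j$ in the converse direction, and the observation that $W_{i}\psd 0$ for $i\le k-1$ as diagonal blocks of PSD matrices, are fine.
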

\begin{proof}
To prove this theorem we need the notion of \emph{weighted matrix geometric
mean}. For $0 < h < 1$, the $h$-weighted matrix geometric mean of $A,B \succ 0$
is denoted $A \#_h B$ and defined by:
\begin{equation}
\label{eq:defgeomean}
A \#_h B := A^{1/2} \left(A^{-1/2} B A^{-1/2}\right)^{h} A^{1/2}.
\end{equation}
Note that $A \#_h B$ is the noncommutative perspective of the power function
$x\mapsto x^h$. The weighted matrix geometric mean is operator concave in
$(A,B)$ and semidefinite representable. Semidefinite representations of $A \#_h
B$ for any rational $h$ are shown in
\cite{sagnol2013semidefinite,fawzi2015lieb}.

Recall that $P_{r_m}$ is the noncommutative perspective of $r_m$. Since
$r_{m,k}(X) = 2^k r_m(X^{1/2^k})$, it is not difficult to verify that the
noncommutative perspective $P_{r_{m,k}}$ of $r_{m,k}$ can be expressed as:
\begin{equation}
\label{eq:Prmkexpr}
P_{r_{m,k}}(Y,X) = 2^k P_{r_{m}}((X \#_{2^{-k}} Y),X).
\end{equation}

The semidefinite representation \eqref{eq:Kmknsdp} then follows from the following three facts:
\begin{enumerate}
\item \emph{Semidefinite representation of weighted matrix geometric means}:
For any $X,Y \succ 0$ and $V \in \S^n$ and $k \geq 1$ we have $X \#_{2^{-k}} Y
\succeq V$ if and only if there exist $Z_0,\dots,Z_k \in \S^n$ that satisfy:
\[
Z_0 = Y, \; Z_k = V \quad \text{ and } \quad \begin{bmatrix} Z_i & Z_{i+1}\\ Z_{i+1} & X\end{bmatrix} \succeq 0 \;\; (i=0,\dots,k-1).
\]
This is the case $h= 1/2^k$ of the semidefinite representation that appears  in
\cite{fawzi2015lieb}. 
This construction hinges on the fact that $X\#_{2^{-k}} Y$ can be expressed in terms of $k$ nested geometric means 
as $X \#_{1/2}(X \#_{1/2} (\dots (X \#_{1/2} Y)))$,
the fact that
\[
X \#_{1/2} Y \psd Z \quad \iff \quad \begin{bmatrix} X & Z\\ Z & Y\end{bmatrix} \psd 0,
\]
and operator monotonicity of the geometric mean with respect to its arguments.
\item \emph{Semidefinite representation of $P_{r_m}$}: For any $V,X \succ 0$
and $T \in \S^n$ we have $P_{r_m}(V,X) \succeq T$ if and only if there exist
$T_1,\dots,T_m$ that satisfy:
\begin{equation}
\label{eq:Prmsdprep}
\sum_{j=1}^m w_j T_j = T \quad \text{ and } \quad 
\begin{bmatrix} V - X - T_j & -\sqrt{t_j} T_j\\ -\sqrt{t_j} T_j & X -  t_j T_j \end{bmatrix} \succeq 0 \;\; (j=1,\dots,m).
\end{equation}
This follows directly from the semidefinite representation of $P_{f_t}$ given
in Proposition \ref{prop:f-joint-concave} (Appendix \ref{app:ffplus-sdp}) and
the fact that $r_m = \sum_{j=1}^m w_j f_{t_j}$.
\item $P_{r_m}$ is monotone in its first argument. This easily follows from the monotonicity of $r_m$.
\end{enumerate}
Combining these three ingredients, and using the expression of $P_{r_{m,k}}$ in
Equation \eqref{eq:Prmkexpr}, yields the desired semidefinite representation
\eqref{eq:Kmknsdp}.
\end{proof}

\subsection{Quantum relative entropy}
\label{sec:qre}

In this section, we see how to use the results from the previous section to
approximate the (Umegaki) quantum relative entropy function, defined by
\begin{equation}
\label{eq:defquantumrelentr2}
D(A\|B) := \Tr[A(\log A-\log B)].
\end{equation}
The next proposition, which appears in~\cite{tropp2015introduction}, shows how to
express the epigraph of $D$ using the operator relative entropy cone (defined in
Equation \eqref{eq:Kren2}).
\begin{proposition}[{\cite[Section 8.8]{tropp2015introduction}}]
\label{prop:quantumrelentr_Dop}
Let $D$ be the relative entropy function \eqref{eq:defquantumrelentr2} and
$D_{\op}$ be the operator relative entropy \eqref{eq:Dopdef1}. Then for any
$A,B \succ 0$ we have
\begin{equation}
\label{eq:idDDop}
D(A\|B) = \phi(D_{\op}(A\otimes I\|I \otimes \bar{B}))
\end{equation}
where $\phi$ is the unique linear map from $\CC^{n^2\times n^2}$ to $\CC$ that
satisfies $\phi(X\otimes Y) = \Tr[XY^T]$, and $\bar{B}$ is the entrywise
complex conjugate of $B$.
\end{proposition}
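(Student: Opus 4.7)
The plan is to unpack the right-hand side of \eqref{eq:idDDop} using the definition of $D_{\op}$ and simplify, exploiting the fact that $A\otimes I$ and $I\otimes \bar B$ act on different tensor factors and therefore commute. Concretely, I would first observe that $(A\otimes I)^{-1/2} = A^{-1/2}\otimes I$, so
\begin{equation*}
(A\otimes I)^{-1/2}(I\otimes \bar B)(A\otimes I)^{-1/2} = A^{-1}\otimes \bar B.
\end{equation*}
Then, using the standard identity $\log(M\otimes N) = \log M\otimes I + I\otimes \log N$ for $M,N\pd 0$ (which follows from simultaneously diagonalizing via eigendecompositions of the two factors), I would write
\begin{equation*}
\log(A^{-1}\otimes \bar B) = -\log A\otimes I + I\otimes \log \bar B.
\end{equation*}

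Next I would substitute this into the definition \eqref{eq:Dopdef1} and sandwich between $(A\otimes I)^{1/2} = A^{1/2}\otimes I$. Since $A$ commutes with $\log A$, the first piece simplifies to $A\log A\otimes I$; the second piece becomes $A\otimes \log \bar B$. Thus
\begin{equation*}
D_{\op}(A\otimes I\|I\otimes \bar B) = A\log A\otimes I - A\otimes \log\bar B.
\end{equation*}

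Finally I would apply $\phi$ term by term using the defining property $\phi(X\otimes Y)=\Tr[XY^T]$. The first term gives $\phi(A\log A\otimes I) = \Tr[A\log A]$. For the second, the crucial (and slightly delicate) observation is that since $B$ is Hermitian positive definite, $\log \bar B = \overline{\log B}$ (check by diagonalizing $B=UDU^*$, which gives $\bar B = \bar U D \bar U^*$ with $\bar U$ unitary), and $\log B$ Hermitian means $(\log \bar B)^T = (\overline{\log B})^T = (\log B)^* = \log B$. Hence
\begin{equation*}
\phi(A\otimes \log\bar B) = \Tr[A(\log \bar B)^T] = \Tr[A\log B],
\end{equation*}
and combining the two terms yields $\Tr[A(\log A - \log B)] = D(A\|B)$ as required.

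The only real obstacle is handling the complex conjugation carefully: the $\bar B$ (rather than $B$) appearing on the right-hand side is exactly what is needed to absorb the transpose built into $\phi$, and getting this right is the one place where a sloppy calculation would go wrong. Everything else reduces to the commutativity of factors acting on disjoint tensor slots and the elementary identity for the logarithm of a tensor product.
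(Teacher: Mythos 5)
Your proposal is correct and follows essentially the same route as the paper's proof: use commutativity of $A\otimes I$ and $I\otimes\bar B$ to reduce $D_{\op}$ to $(A\log A)\otimes I - A\otimes\log\bar B$, then apply $\phi$ term by term. The only difference is that you spell out the step $(\log\bar B)^T=\log B$ (which the paper leaves implicit in ``we immediately see''), and you get it right.
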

\begin{proof}
We reproduce the proof in \cite[Section 8.8]{tropp2015introduction}. 
Observe that $A\otimes I$ and $I \otimes \bar{B}$ commute and, as such,
$D_{\op}(A\otimes I\|I \otimes \bar{B}) = (A\otimes I)(\log(A\otimes I) -
\log(I\otimes \bar{B}))$.  Using the fact that $\log(X\otimes Y) = (\log
X)\otimes I + I\otimes (\log Y)$, the previous equation simplifies to
$D_{\op}(A\otimes I\|I\otimes \bar{B}) = (A\log A)\otimes I - A\otimes (\log
\bar{B})$. Now, using the fact $\phi(X\otimes Y) = \Tr[XY^T]$, we immediately see
that \eqref{eq:idDDop} holds.
\end{proof}
The previous proposition allows us to express the epigraph of the quantum relative entropy function \eqref{eq:defquantumrelentr2} in terms of the operator relative entropy cone. This is the object of the next statement.
\begin{corollary}
For any $A,B \pd 0$ and $\tau \in \RR$ we have:
\begin{equation}
\label{eq:DepiDopcone}
D(A\|B) \leq \tau \quad \iff \quad \exists T \in \S^{n^2} : (A\otimes I, I \otimes \bar{B}, T) \in \Kre^{n^2} \text{ and } \phi(T) \leq \tau.
\end{equation}
\end{corollary}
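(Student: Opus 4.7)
The plan is to prove the two directions separately, using Proposition~\ref{prop:quantumrelentr_Dop} as the central identity. The only nontrivial ingredient we need to add is that the linear map $\phi$ is \emph{positive}, in the sense that it preserves the Loewner order on Hermitian matrices; this is what converts an operator inequality $D_{\op} \nsd T$ into a scalar inequality between traces.

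For the forward direction, assume $D(A\|B)\leq \tau$. Set $T := D_{\op}(A\otimes I\|I\otimes \bar B)$. Since $A,B\pd 0$, also $A\otimes I, I\otimes \bar B \pd 0$, so the triple $(A\otimes I, I\otimes \bar B, T)$ lies in the set whose closure defines $\Kre^{n^2}$ (the required condition $D_{\op}(A\otimes I\|I\otimes \bar B)\nsd T$ holds with equality). Moreover, by Proposition~\ref{prop:quantumrelentr_Dop}, $\phi(T) = D(A\|B) \leq \tau$.

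For the reverse direction, suppose $T\in \S^{n^2}$ satisfies $(A\otimes I, I\otimes \bar B, T)\in \Kre^{n^2}$ and $\phi(T)\leq \tau$. Because $A\otimes I$ and $I\otimes \bar B$ are positive definite and $D_{\op}$ is continuous on $\S_{++}^{n^2}\times \S_{++}^{n^2}$, membership in the closure forces $D_{\op}(A\otimes I\|I\otimes \bar B)\nsd T$. The key step is to apply $\phi$ to both sides of this operator inequality. To see that this is legitimate, write $\phi$ using the (unnormalized) maximally entangled vector $|\Omega\rangle = \sum_{i=1}^n |i\rangle\otimes|i\rangle\in\CC^{n^2}$: a direct calculation shows that for any $X,Y\in \CC^{n\times n}$,
\[
\langle \Omega|\, X\otimes Y\, |\Omega\rangle \;=\; \sum_{i,j} X_{ij}Y_{ij} \;=\; \Tr[XY^T],
\]
so by linearity $\phi(M) = \Tr[M\,|\Omega\rangle\langle\Omega|]$ for all $M\in \CC^{n^2\times n^2}$. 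Since $|\Omega\rangle\langle\Omega|\psd 0$, the functional $\phi$ is positive and therefore monotone with respect to the Loewner order on Hermitian matrices. Applying this monotonicity to $D_{\op}(A\otimes I\|I\otimes \bar B)\nsd T$ and invoking Proposition~\ref{prop:quantumrelentr_Dop} gives
\[
D(A\|B) \;=\; \phi\!\bigl(D_{\op}(A\otimes I\|I\otimes \bar B)\bigr) \;\leq\; \phi(T) \;\leq\; \tau,
\]
which is the desired conclusion.

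The only subtle point is the positivity of $\phi$; beyond that, the argument is just a packaging of Proposition~\ref{prop:quantumrelentr_Dop} together with the observation that Hermitian positive-definiteness of $A\otimes I$ and $I\otimes \bar B$ lets us peel away the closure in the definition of $\Kre^{n^2}$.
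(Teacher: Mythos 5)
Your proposal is correct and follows essentially the same route as the paper: both directions reduce to the identity of Proposition~\ref{prop:quantumrelentr_Dop} together with the positivity (hence Loewner-monotonicity) of $\phi$, which the paper records in Remark~\ref{rem:phiform} via exactly the same observation that $\phi(Z)=w^*Zw$ with $w$ the vectorized identity. Your extra care about passing from the closure defining $\Kre^{n^2}$ back to the operator inequality is a valid (if minor) point the paper leaves implicit.
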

\begin{proof}
Straightforward from \eqref{eq:idDDop}, and the fact that $X \nsd Y$ implies
$\phi(X)\leq \phi(Y)$ (see Remark \ref{rem:phiform} below).
\end{proof}
One can then get a semidefinite approximation of the constraint $D(A\|B) \leq
\tau$ by using the approximation given in \eqref{eq:Kmkn} of the cone
$\Kre^{n^2}$ and plugging it in \eqref{eq:DepiDopcone}. Note that the
semidefinite approximation we thus get uses blocks of size $2n^2 \times 2n^2$,
because of the tensor product construction of Equation \eqref{eq:idDDop}.
\begin{remark}
\label{rem:phiform}
Note that the linear map $\phi$ in Proposition \ref{prop:quantumrelentr_Dop} is given by $\phi(Z) = w^* Z w$ for $Z \in \CC^{n^2 \times n^2}$, where $w \in \CC^{n^2}$ is the vector obtained by stacking the columns of the $n\times n$ identity matrix. It follows 
that $\phi$ is a positive linear map, in the sense that if $Z \psd 0$ then $\phi(Z) \geq 0$. 
\end{remark}

\paragraph{A smaller representation} One can exploit the special structure of
the linear map $\phi$ in \eqref{eq:idDDop}, to reduce the size of the
semidefinite approximation of $D(A\|B)$ from having $m+k$ blocks of size $2n^2
\times 2n^2$, to having $m$ blocks of size $(n^2+1)\times (n^2+1)$ and $k$
blocks of size $2n^2\times 2n^2$.
The main idea for this reduction is to observe that the rational function
$f_t$, which is the main building block of our approximations, can be expressed
as a Schur complement, namely we have $t P_{f_t}(X,Y) = Y - Y(Y+t(X-Y))^{-1}
Y$. From this observation, one can get the following representation for the
hypograph $v^* P_{f_t}(X,Y) v$, where $v \in \CC^n$:
\[
v^*  P_{f_t}(X,Y) v \geq \tau \iff 
\begin{bmatrix}
Y+t(X-Y) & Yv\\
v^{*} Y & v^* Yv - t \tau
\end{bmatrix} \succeq 0.
\]
This representation clearly has size $(n+1)\times (n+1)$. Combining this with
the fact that $\phi$ has the form $\phi[X] = w^* X w$ (see Remark
\ref{rem:phiform}), allows us to reduce the semidefinite approximation of
$D(A\|B)$.

\section{Approximating operator concave functions}
\label{sec:op_concave}
The approximations to the relative entropy cone developed in Section~\ref{sec:op_rel_entr} used the facts that
\begin{enumerate}
	\item the logarithm is an integral of (semidefinite representable) rational functions, which can be approximated via quadrature; and 
	\item the logarithm obeys the functional equation $\log(\sqrt{x}) = \frac{1}{2}\log(x)$.
\end{enumerate}
In this section we show how to generalize these ideas, allowing us to give
semidefinite approximations for convex cones of the form
\begin{equation}
	\label{eq:Kgn} K_g^{n} := \cl\left\{(X,Y,T)\in \S_{++}^n\times \S_{++}^n\times \S^n\;:\; -P_g(X,Y) \nsd T\right\}
\end{equation}
for a range of operator concave functions $g:\RR_{++}\rightarrow \RR$, where
$P_g(X,Y)$ is the noncommutative perspective of $g$ defined in
\eqref{eq:matrixpersp}.  In Section~\ref{sec:lowner}, we discuss functions that
admit similar integral representations to the logarithm, which can be
approximated via quadrature.  In Section~\ref{sec:improved}, we present examples
of functions with perspectives $P_g$ that obey functional equations of the form
$P_{g}\circ \Phi = P_g$ where $\Phi$ is a map with certain monotonicity
properties, and use these to obtain smaller semidefinite approximations.

\subsection{Approximations via L{\"o}wner's theorem}
\label{sec:lowner}

A general class of functions that admit integral representations are
operator monotone functions, of which the logarithm is a special
case. Recall that these are functions $g:\RR_{++}\rightarrow \RR$ that satisfy $g(X) \nsd g(Y)$ 
whenever $X\nsd Y$ for $X,Y \in \S^n_{++}$ and any $n \geq 1$. The
following theorem, due to L\"owner, shows that any operator monotone function
admits an integral representation in terms of the rational functions $f_t$ that
we saw earlier (see Appendix \ref{app:int-rep}).
\begin{theorem}[L\"{o}wner]
	\label{thm:lowner}
	If $g:\RR_{++}\rightarrow \RR$ is a non-constant operator monotone
function then there is a unique probability measure $\nu$ supported on $[0,1]$
such that 
	\begin{equation}
	\label{eq:int-f} g(x) = g(1) + g'(1)\int_{0}^{1} f_t(x)\;d\nu(t).
	\end{equation}
	where $f_t$ is the rational function defined in \eqref{eq:logint}.
\end{theorem}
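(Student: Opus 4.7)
The plan is to prove L\"owner's theorem by routing through the classical Pick/Nevanlinna theory for analytic self-maps of the upper half-plane. The key classical fact (due to L\"owner himself) is that a function $g:\RR_{++}\rightarrow\RR$ is operator monotone if and only if it admits an analytic extension $\tilde g$ to the upper half-plane $\CC_+$ with $\operatorname{Im}\tilde g \geq 0$ on $\CC_+$, continuing analytically across $(0,\infty)$ on which it takes real values. Once this Pick-class characterization is in hand, the stated integral representation becomes a reparametrization of the Herglotz/Nevanlinna integral formula tailored to functions that extend analytically across $(0,\infty)$.

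\textbf{Key steps.} First, I would establish the Pick-class analytic extension. For $g$ operator monotone one shows that, for any finite $x_1,\dots,x_N \in \RR_{++}$, the L\"owner divided-difference matrix $\bigl[(g(x_i)-g(x_j))/(x_i-x_j)\bigr]_{i,j}$ is positive semidefinite, obtained by differentiating operator monotonicity of $g$ along a rank-one perturbation path. This family of kernel positivities is equivalent to the existence of a Pick-class analytic extension continuing analytically across $(0,\infty)$, where it is real-valued. Second, I would apply the Nevanlinna representation to $\tilde g$:
\[
\tilde g(z) = \alpha + \beta z + \int_{\RR} \frac{1+sz}{s-z}\, d\mu(s),
\]
where $\alpha \in \RR$, $\beta \geq 0$, and $\mu$ is a finite positive Borel measure on $\RR$. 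Third, the fact that $\tilde g$ continues analytically and takes real values on $(0,\infty)$ forces $\operatorname{supp}(\mu) \subseteq (-\infty,0]$.

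\textbf{Bringing it to the stated form.} The substitution $t = 1/(1-s)$ bijects $(-\infty,0]$ onto $(0,1]$, and a direct computation shows that, up to a positive density and an additive real constant, each kernel $(1+sz)/(s-z)$ is a positive multiple of $f_t(z) = (z-1)/(t(z-1)+1)$. The separate $\beta z$ term can be rewritten as $\beta + \beta f_0(z)$ using $f_0(z)=z-1$, corresponding to an atom of mass $\beta$ at $t=0$ in the pushforward measure. Collecting additive real constants into $\alpha$ yields $g(x) = c + \int_{[0,1]} f_t(x)\, d\tilde\mu(t)$ for a finite positive measure $\tilde\mu$ on $[0,1]$. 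Evaluating at $x=1$ gives $c=g(1)$ since $f_t(1)=0$ for every $t$, and differentiating under the integral at $x=1$ (justified by $f_t'(1)=1$ uniformly in $t$) gives $g'(1) = \tilde\mu([0,1])$. Nonconstancy of $g$ forces $g'(1)>0$, so $\nu := \tilde\mu/g'(1)$ is a probability measure producing the stated formula. Uniqueness of $\nu$ is inherited from uniqueness in the Nevanlinna representation combined with bijectivity of the change of variables.

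\textbf{Main obstacle.} The hard step is the first one: passing from matrix-inequality operator monotonicity (a purely finite-dimensional hypothesis) to a global analytic Pick-class extension on $\CC_+$. This is the genuine analytic content of L\"owner's theorem, and the standard routes all require some nontrivial input, such as L\"owner-matrix positivity combined with a Hamburger-type moment argument, or Bendat--Sherman--Donoghue interpolation by rational Pick functions. By contrast, the Nevanlinna representation, the change of variables to $f_t$, and the final normalization to a probability measure are essentially bookkeeping once the Pick-class extension has been secured.
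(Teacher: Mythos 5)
Your proposal is correct in outline but takes a genuinely different route from the paper. The paper does not prove L\"owner's theorem from first principles: it cites the Hansen--Pedersen representation (\cite[Theorem 4.4]{hansen1982jensen}) for operator monotone functions on $(-1,1)$, namely $h(z)=h(0)+h'(0)\int_{-1}^1 \frac{z}{1-tz}\,d\tilde\nu(t)$ with $\tilde\nu$ a probability measure, and then obtains \eqref{eq:int-f} by the M\"obius substitution $h(z)=g\bigl(\frac{1+z}{1-z}\bigr)$, the identities $h(0)=g(1)$, $h'(0)=2g'(1)$, and an affine relabeling of $[-1,1]$ as $[0,1]$. You instead go back to the half-plane picture: L\"owner-matrix positivity, Pick-class extension, the Nevanlinna formula $\tilde g(z)=\alpha+\beta z+\int\frac{1+sz}{s-z}\,d\mu(s)$ with $\operatorname{supp}\mu\subseteq(-\infty,0]$, and the substitution $t=1/(1-s)$. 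Your bookkeeping checks out: one has the identity $\frac{1+sz}{s-z}=\frac{1+s}{s-1}+(1+s^2)t^2 f_t(z)$ with $t=1/(1-s)$, the density $(1+s^2)/(1-s)^2$ stays bounded as $s\to-\infty$ so the pushforward measure is finite, the $\beta z$ term is an atom at $t=0$ via $f_0(z)=z-1$, and $f_t(1)=0$, $f_t'(1)=1$ give the normalization $c=g(1)$, $\tilde\mu([0,1])=g'(1)>0$. What the paper's route buys is brevity: the cited theorem already arrives normalized (probability measure, values pinned at the reference point), so only a change of variables remains. What your route buys is that it exhibits where the representation comes from and handles the linear term and the support condition explicitly, at the cost of having to re-derive the normalization.

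The one caveat is that your first step --- passing from operator monotonicity to a Pick-class extension of $g$ across $(0,\infty)$ --- is named but not executed; as you acknowledge, this is the entire analytic content of L\"owner's theorem, and your proposal defers it to the classical literature. That is a gap relative to a self-contained proof, but it is the same kind of deferral the paper makes by citing Hansen--Pedersen, whose Theorem 4.4 encapsulates exactly that content on $(-1,1)$. So neither argument is self-contained; they simply choose different classical results as their black box.
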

The logarithm function corresponds to the case where the measure $\nu$, in~\eqref{eq:int-f}, is the Lebesgue
measure on $[0,1]$.
One corollary of L\"owner's theorem is that any operator monotone function on $\RR_{++}$ is necessarily operator concave, since the $f_t$ are operator concave, as we already saw. Given an operator monotone function 
$g:\RR_{++}\rightarrow \RR$ (which is necessarily also operator concave)
one can apply Gaussian quadrature on \eqref{eq:int-f} (with respect to the
measure $\nu$) to obtain a rational approximation of $g$. If we use $m$ quadrature nodes, 
we denote the corresponding rational function $r_m$. In Appendix
\ref{app:pade}, we establish an error bound on the resulting approximation, which
allows us to prove the following general theorem on semidefinite approximations
of operator monotone functions.

\begin{theorem}[Semidefinite approximation of operator monotone functions]
\label{thm:sdpapprox-opmonotone}
Let $g:\RR_{++}\rightarrow \RR$ be an operator monotone (and hence operator
concave) function and let $a > 1$. Then for any $\epsilon > 0$ there is a
rational function $r$ such that $|r(x) - g(x)| \leq \epsilon$ for all $x\in
[1/a,a]$, and $r$ has a semidefinite representation of size
$O(\log(1/\epsilon))$.
\end{theorem}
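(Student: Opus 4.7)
The plan is to combine L\"owner's integral representation (Theorem~\ref{thm:lowner}) with Gaussian quadrature, mirroring the construction of $r_m$ for the logarithm but applied to a general operator monotone $g$. By L\"owner's theorem, write
\[
g(x) = g(1) + g'(1)\int_0^1 f_t(x)\,d\nu(t),
\]
and let $r_m(x) := g(1) + g'(1)\sum_{j=1}^m w_j f_{t_j}(x)$, where the $(w_j,t_j)$ are the $m$-point Gauss quadrature nodes and weights associated with the probability measure $\nu$ on $[0,1]$. Because the weights $w_j$ are positive and each $f_{t_j}$ has the $2\times 2$ semidefinite representation from Proposition~\ref{prop:f-op-concave}, $r_m$ inherits a semidefinite representation of size $O(m)$. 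So it remains only to show that $|r_m(x) - g(x)| \leq \epsilon$ for $x \in [1/a, a]$ with $m = O(\log(1/\epsilon))$.

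For the quadrature error, I would fix $x\in[1/a,a]$ and regard the integrand $t\mapsto f_t(x)=(x-1)/(t(x-1)+1)$ as a function of $t$. Its only pole is at $t^\star(x) = -1/(x-1)$, which lies outside $[0,1]$ at a distance bounded below by a positive quantity $d(a)>0$ depending only on $a$ (explicitly, $|t^\star|\ge 1/(a-1)$ when $x>1$, and $t^\star\ge a/(a-1)$ when $x<1$). Hence $t\mapsto f_t(x)$ extends analytically to a Bernstein ellipse with parameter $\rho=\rho(a)>1$ independent of $x$, on which it is uniformly bounded by some $M(a)$. The standard geometric convergence theorem for Gauss quadrature (which holds for any positive measure on $[0,1]$; see the appendix referenced for the logarithm case) then gives
\[
|r_m(x) - g(x)| \;\le\; |g'(1)|\cdot C(a)\,\rho(a)^{-2m}
\]
uniformly in $x\in[1/a,a]$. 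Choosing $m = \lceil \tfrac{1}{2\log\rho(a)}\log(1/\epsilon)\rceil + O(1)$ yields the claimed bound, and since the semidefinite representation of $r_m$ has size $O(m)=O(\log(1/\epsilon))$ the theorem follows.

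The main technical obstacle is the uniform-in-$x$ geometric convergence rate of Gauss quadrature: one must ensure that the Bernstein ellipse parameter $\rho$ and the bound $M(a)$ can be chosen independently of $x\in[1/a,a]$. This is handled by the observation above that the pole $t^\star(x)$ of the integrand stays uniformly bounded away from $[0,1]$ as $x$ ranges over the compact interval $[1/a,a]$. Note that this argument only gives the $O(\log(1/\epsilon))$ rate rather than the $O(\sqrt{\log(1/\epsilon)})$ rate of Theorem~\ref{thm:epserrorscalar}; the improved rate for the logarithm specifically exploited the extra functional equation $\log(x) = \tfrac{1}{2^k}\log(x^{1/2^k})$, which is not available for a general operator monotone $g$ (see Section~\ref{sec:improved} for the class of functions where it is).
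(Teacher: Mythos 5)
Your proposal is correct and follows the same overall route as the paper: L\"owner's representation, Gaussian quadrature with respect to $\nu$, positivity of the quadrature weights, and the $2\times 2$ semidefinite representation of each $f_{t_j}$, so that the only substantive work is the uniform geometric decay of the quadrature error. Where you differ is in how that decay is established. The paper (Appendix~\ref{app:pade}, Proposition~\ref{prop:err-monotone}) computes the Chebyshev expansion of $t\mapsto f_t(x)$ \emph{exactly} via the generating function of the Chebyshev polynomials, obtaining coefficients proportional to $\bigl(\tfrac{\sqrt{x}-1}{\sqrt{x}+1}\bigr)^k$, and then uses only that $\nu$ and its Gaussian discretization $\nu_m$ are probability measures matching moments up to degree $2m-1$; this yields the explicit bound \eqref{eq:nu-bound1} with contraction ratio $\bigl(\tfrac{\sqrt{x}-1}{\sqrt{x}+1}\bigr)^{2}$, uniformly bounded away from $1$ on $[1/a,a]$. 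You instead invoke the generic Bernstein-ellipse convergence theorem for Gauss quadrature, justified by locating the pole $t^\star(x)=-1/(x-1)$ of the integrand and checking that it stays at distance at least $1/(a-1)$ from $[0,1]$ uniformly over $x\in[1/a,a]$ (and your observation that the theorem extends to an arbitrary positive measure is right, since it only needs exactness on degree $\le 2m-1$ and positivity of the weights). The two arguments are two faces of the same fact --- geometric decay of Chebyshev coefficients is equivalent to analyticity in a Bernstein ellipse --- and both deliver $O(\log(1/\epsilon))$. What the paper's explicit computation buys is sharp, $x$-dependent constants, which are then reused for Proposition~\ref{prop:errboundscalar} and Theorem~\ref{thm:epserrorscalar} (the $O(\sqrt{\log(1/\epsilon)})$ rate for the logarithm after square-rooting); your unspecified $C(a)$ and $\rho(a)$ would not suffice there, but they are enough for the present theorem. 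Your closing remark correctly identifies why the improved rate is unavailable for general $g$. (One trivial omission: L\"owner's theorem as stated requires $g$ non-constant; the constant case is handled by taking $r=g$.)
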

\begin{proof}
We show that $r=r_m$ has the desired properties. In Appendix~\ref{app:pade},
Equation~\eqref{eq:nu-bound1}, we show that the error $|r_m(x) - g(x)|$ for
$x\in[1/a,a]$ decays linearly in $m$, i.e., is $O(\rho^{m})$ for some constant
$0 < \rho < 1$ depending on $a$. In other words if we take
$m=O(\log(1/\epsilon))$ we get $|r_m(x) - g(x)| \leq \epsilon$ for all $x\in
[1/a,a]$. Since each rational function $f_t$ has a semidefinite representation
of size $2\times 2$ (see \eqref{eq:ftlmiscalar}) it follows that $r_m$ has a
semidefinite representation of size $O(m) = O(\log(1/\epsilon))$ as a sum of
$m$ such functions.
\end{proof}

The approximation $r$ we produce in Theorem \ref{thm:sdpapprox-opmonotone} is
also operator monotone and operator concave, and can be used to approximate the
matrix hypograph of $g$, as well as its noncommutative perspective, just like
for the logarithm function. The following result quantifies the 
error for the approximation of the cone $K_g^n$ (defined in~\eqref{eq:Kgn}) we obtain this way.
\begin{theorem}[{Approximation error for $K_g^n$}] Let $a > 1$ and $\epsilon > 0$ and 
let $g:\RR_{++}\rightarrow \RR$ be operator monotone (and hence operator concave). Then there exists $m$ with 
$m = O(\log_{e}(1/\epsilon))$ such that:
\begin{enumerate}
	\item if $0\nd a^{-1}Y\nsd X \nsd aY$ and $(X,Y,T)\in K_g^n$ then $(X,Y,T+X\epsilon)\in K^n_{r_m}$
	\item if $0\nd a^{-1}Y\leq X \nsd aY$ and $(X,Y,T)\in K_{r_m}^n$ then $(X,Y,T+X\epsilon)\in K_g^n$.
\end{enumerate}
\end{theorem}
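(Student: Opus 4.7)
The plan is to lift the scalar argument used in Theorem~\ref{thm:Kreapprox} to the operator setting via functional calculus applied to the self-adjoint matrix $W := Y^{-1/2}XY^{-1/2}$.

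First I would invoke Theorem~\ref{thm:sdpapprox-opmonotone} with target accuracy $\epsilon' := \epsilon/a$ to obtain a rational function $r_m$ with $m = O(\log(1/\epsilon')) = O(\log(1/\epsilon))$ (since $a$ is fixed) satisfying $|r_m(x) - g(x)| \leq \epsilon'$ for every $x \in [1/a, a]$. The hypothesis $0 \pd a^{-1}Y \nsd X \nsd aY$ is equivalent to $a^{-1} I \nsd W \nsd a I$, so the spectrum of $W$ lies in $[1/a, a]$. Functional calculus then promotes the scalar bound into the operator inequality
\[
-\epsilon' I \;\nsd\; r_m(W) - g(W) \;\nsd\; \epsilon' I,
\]
and conjugating by $Y^{1/2}$ (an operation that preserves L{\"o}wner order) yields the perspective-level bound
\[
-\epsilon' Y \;\nsd\; P_{r_m}(X,Y) - P_g(X,Y) \;\nsd\; \epsilon' Y.
\]

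Both directions of the theorem then follow at once. For direction (1), the left half of the perspective-level bound rearranges to $-P_{r_m}(X,Y) \nsd -P_g(X,Y) + \epsilon' Y$; combining with the hypothesis $-P_g(X,Y) \nsd T$ gives $-P_{r_m}(X,Y) \nsd T + \epsilon' Y$. The hypothesis $a^{-1}Y \nsd X$ rearranges to $Y \nsd aX$, so $\epsilon' Y \nsd a\epsilon' X = \epsilon X$, and hence $(X, Y, T + \epsilon X) \in K_{r_m}^n$. Direction (2) is completely symmetric, using the right half of the perspective-level bound (which gives $-P_g(X,Y) \nsd -P_{r_m}(X,Y) + \epsilon' Y$) in place of the left.

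The only real bookkeeping step is converting the $\epsilon' Y$ error term into an $\epsilon X$ error term via $Y \nsd aX$; this costs a factor of $a$ that is absorbed upfront by setting $\epsilon' = \epsilon/a$, and since $a$ is a fixed constant the asymptotic bound $m = O(\log(1/\epsilon))$ is unaffected. No substantive obstacle arises beyond this, since the entire argument is a direct matrix lift of the scalar argument of Theorem~\ref{thm:Kreapprox} via the functional calculus of $W$.
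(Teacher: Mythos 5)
Your proposal is correct and follows essentially the same route as the paper: invoke Theorem~\ref{thm:sdpapprox-opmonotone}, promote the scalar bound to an operator inequality via functional calculus on the relevant congruence of $X$ and $Y$, conjugate back, and add the hypograph inequality. The only (cosmetic) difference is that the paper conjugates by $X^{1/2}$ and so obtains the error term $\epsilon X$ directly, whereas you conjugate by $Y^{1/2}$ and then trade $\epsilon' Y$ for $\epsilon X$ using $Y \nsd aX$ at the cost of setting $\epsilon' = \epsilon/a$, which does not affect the asymptotics; your two-sided bound also handles both directions at once, where the paper proves only the first and declares the second similar.
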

\begin{proof}
The proof is a straightforward matrix generalization of the proof of Theorem~\ref{thm:Kreapprox}. 
We establish only the first statement, since the second is similar. 

If $0 \nd a^{-1}Y \nsd X \nsd aY$ then $a^{-1}I \nsd X^{-1/2}YX^{-1/2} \nsd aI$. By Theorem~\ref{thm:sdpapprox-opmonotone}
there is $m=O(\log_e(1/\epsilon))$ such that $|r_m(x) - g(x)| \leq \epsilon$. Hence 
$-r_m(X^{-1/2}YX^{-1/2}) + g(X^{-1/2}YX^{-1/2}) \nsd \epsilon I$ and so, multiplying on the left and right by $X^{1/2}$, we see that 
$-P_{r_m}(X,Y) + P_{g}(X,Y) \nsd \epsilon X$. 
Since $(X,Y,T)\in K_g^n$, it follows that $-P_g(X,Y) \nsd T$ and so that
$-P_{r_m}(X,Y)\nsd T+\epsilon X$. This shows that
$(X,Y,T+\epsilon X)\in K_{r_m}^n$. 
\end{proof}

Theorem \ref{thm:sdpapprox-opmonotone} shows that, by just using Gaussian quadrature on \eqref{eq:int-f},
we can get semidefinite approximations of size $O(\log(1/\epsilon))$ for any operator monotone function $g:\RR_{++}\rightarrow \RR$. 
In the next section we will see that if the function $g$ satisfies additional functional relations, then we can obtain approximations of order $O(\sqrt{\log(1/\epsilon)})$ or smaller. Before doing so, we consider certain positive-valued functions that will be useful later.

\paragraph{Positive-valued functions} In the special case when $g$ takes only
positive values, one can prove (see Appendix~\ref{app:int-rep}) 
an alternative integral representation, that has
additional nice properties and takes the form
\begin{equation}
\label{eq:int-fplus} g(x) = g(0) + (g(1)-g(0))\int_{0}^{1} f_t^+(x)\;d\mu(t).
\end{equation}
Here, $\mu$ is a probability measure on $[0,1]$ and $f_t^+$ is the
rational function $f_t^+(x) = ((1-t)x^{-1} + t)^{-1}$. The main advantage of
using this new integral representation instead of \eqref{eq:int-f}, is that
the noncommutative perspective of $f_t^+$ is monotone with respect to both
arguments, unlike $f_t$. This means that the perspective $P_g$ of any positive
operator monotone function $g$, is monotone with respect to both arguments.
Approximating $g$ by applying quadrature to~\eqref{eq:int-fplus} ensures this
property is preserved. 

\paragraph{Examples} The function $g(x) = x^{1/2}$ is known to be operator
monotone and has the integral representation \eqref{eq:int-fplus} with the
measure $\mu$ given by the \emph{arcsine distribution}:
\begin{equation}
\label{eq:musqrt}
d\mu(t) = \frac{dt}{\pi \sqrt{t(1-t)}}. 
\end{equation}
Another function known to be operator monotone is $g(x) = (x-1)/\log(x)$. In
this case one can show that the measure $\mu$ in \eqref{eq:int-fplus} is:
\begin{equation}
\label{eq:mulogmean}
d\mu(t) = \frac{dt}{t(1-t)(\pi^2 + \left[\log\left(\frac{1-t}{t}\right)\right]^2)}.
\end{equation}

More information about operator monotone functions and their integral
representations can be found in the books by Bhatia
\cite{bhatiaPsdMatrices,bhatia2013matrix}.

\subsection{Improved approximations via functional equations}
\label{sec:improved}
The functional equation $\log(x^{1/2}) = (1/2) \log(x)$ for the logarithm gives
rise to a functional equation for the perspective, $P_{\log}(x,y) =
y\log(x/y)$, of the logarithm. Indeed if we define 
\[ \Phi: \RR_{++}^2\rightarrow \RR_{++}^2 \quad\textup{by}\quad \Phi(x,y) = (2\sqrt{xy},2y)\quad\textup{then}\quad
P_{\log}\circ \Phi = P_{\log}.\]
In Section~\ref{sec:scalar_log} we constructed rational approximations $r_m$
for the logarithm, and then improved the approximation quality by successive
square-rooting, defining $r_{m,k}(x) = 2^{k}r_m(x^{1/2^k})$. At the level of
perspectives, we have that 
\[ P_{r_{m,k}}(x,y) = 2^{k}y\,r_m\left(\frac{x^{1/2^k}}{y^{1/2^k}}\right) = 
2^{k}y\,r_m\left(\frac{2^kx^{1/2^k}y^{1-1/2^{k}}}{2^k y}\right) =  
P_{r_m}(\Phi^{(k)}(x,y))\]
where $\Phi^{(k)}$ denotes the composition of $\Phi$ with itself $k$ times.

A similar approach is possible for operator monotone functions
$g:\RR_{++}\rightarrow \RR_{++}$, that satisfy a functional equation of the form
$P_g\circ \Phi = P_g$, as long as $\Phi$ has certain monotonicity and
contraction properties. In these cases, we  can obtain semidefinite
representable approximations to $g$ that have smaller descriptions, for a given
approximation accuracy, than the approximations by rational functions given in
Theorem~\ref{thm:sdpapprox-opmonotone}. We make this precise in
Theorem~\ref{thm:phi} to follow.  For simplicity of notation,  we work in the
scalar setting, but our arguments all extend to the matrix setting.

Examples of operator monotone functions obeying a functional equation of the
desired form come from the logarithmic mean and the arithmetic-geometric mean.
\begin{description}
	\item[Logarithmic mean] In Section~\ref{sec:lowner} we saw that the
function $g(x) = \frac{x-1}{\log(x)}$ is operator monotone. Its perspective is
the \emph{logarithmic mean}:
	\[ P_g(x,y) = \frac{x-y}{\log(x)-\log(y)},\]
	a function that arises naturally in problems of heat transfer, and in
the Riemannian geometry of positive semidefinite matrices (see,
e.g.,~\cite[Section 4.5]{bhatiaPsdMatrices}).  If we define $\Phi(x,y) =
((x+\sqrt{xy})/2,(y+\sqrt{xy})/2)$ then the logarithmic mean obeys the functional equation:
	\begin{equation}	
	\label{eq:lm-phi} P_{g}(\Phi(x,y)) =
\frac{(x-y)/2}{\log\left(\frac{x+\sqrt{xy}}{y+\sqrt{xy}}\right)} =
\frac{x-y}{\log(x/y)} = P_{g}(x,y).
	\end{equation}
	The logarithmic mean also satisfies other functional equations that are
closely related to Borchardt's algorithm and
variants~\cite{carlson1972algorithm} for computing the logarithm. These could
also be used in the present context, but we focus on~\eqref{eq:lm-phi} for
simplicity.
	\item[Arithmetic-geometric mean (AGM)] The arithmetic-geometric mean of
a pair of positive scalars $x,y$, is defined as the common limit of the pair of
(convergent) sequences $x_0 = x$, $y_0 = y$,
	\[ x_{k+1} = \frac{x_k+y_k}{2}\qquad\textup{and}\qquad y_{k+1} = \sqrt{x_k y_k}.\]
	This limit is denoted $\textup{AGM}(x,y)$, and is the perspective of the
positive, operator monotone function, $g(x) = \textup{AGM}(x,1)$.  Remarkably
(see, e.g.,~\cite[Equation (1.7)]{cox2004arithmetic}), the arithmetic-geometric
mean is related to the complete elliptic integral of the first kind, $K(x)$, via
	\[ \textup{AGM}(1+x,1-x) = \frac{\pi}{2}\frac{1}{K(x)}.\]
	Since it is defined as the limit of an iterative process, if $\Phi(x,y) = ((x+y)/2,\sqrt{xy})$ then 
	\[ \textup{AGM}(\Phi(x,y)) =  \textup{AGM}((x+y)/2,\sqrt{xy}) = \textup{AGM}(x,y). \]
\end{description}
More examples can be obtained by considering operator monotone functions
constructed via operator mean iterations, discussed, for instance,
in~\cite{besenyei2013successive}. 

\subsubsection{Structure of approximations}
\label{sec:phi-structure}
Suppose $g:\RR_{++}\rightarrow \RR_{++}$ is positive and operator monotone, and
let $r_m^+$ be the rational, positive, operator monotone approximation to $g$
obtained by applying Gaussian quadrature (with respect to the measure $\mu$) to
the integral representation~\eqref{eq:int-fplus}. If, in addition, $P_g \circ
\Phi = P_g$ for some map $\Phi:\RR_{++}^2\rightarrow \RR_{++}^2$, then we can
define a two-parameter family of approximations by
\begin{equation}	
\label{eq:rmkphi} P_{r_{m,k}} = P_{r_{m}^+}\circ\Phi^{(k)}.
\end{equation}
It makes sense to do this as long as $\Phi$ maps points `closer' to the ray
generated by $(1,1)$ (in a way made precise in Theorem~\ref{thm:phi}, to
follow), and the approximation $r_m^+$ of $g$ is accurate near $x=1$.

From now on we assume that $\Phi$ has the form
\begin{equation}	
\label{eq:phi-form}
	\Phi(x,y) = (P_{h_1}(x,y),P_{h_2}(x,y)),
\end{equation}
where $h_1,h_2:\RR_{++}\rightarrow \RR_{++}$ are positive, operator monotone,
functions. Observe that $\Phi$ has this form for the examples of the
logarithmic mean and the arithmetic-geometric mean. If $\Phi$ has the
form~\eqref{eq:phi-form}, then  $P_{r_{m,k}}(x,y)$ (defined
in~\eqref{eq:rmkphi}) is positive, jointly concave, and jointly monotone for
all $k\geq 0$ and $m\geq 1$.  In particular, these monotonicity and concavity
properties ensure that the cones $K_{m,k}:=K_{r_{m,k}}$ can be (recursively)
expressed as $K_{m,0} = K_{r_m^+}$ and 
\[ K_{m,k} = \{(x,y,\tau)\in \RR_{++}^2\times \RR\,:\, \exists u_1,u_2\in \RR\;\textup{s.t.}\;
	P_{h_1}(x,y)\geq u_1,\;P_{h_2}(x,y)\geq u_2,\;(u_1,u_2,\tau) \in K_{m,k-1}\}\]
for all $k\geq 1$. If the cones $K_{h_1}$ and $K_{h_2}$ associated with $h_1$
and $h_2$ have semidefinite descriptions of size $s_1$ and $s_2$ respectively,
then $K_{m,k}$ has a semidefinite description of size $2m+k(s_1+s_2)$.

\subsubsection{Approximation error}
The following result shows that if $\Phi$ has contraction and monotonicity
properties, we can obtain smaller semidefinite approximations of nonnegative operator
monotone functions $g$ satisfying a functional equation of the form 
$P_g \circ\Phi = P_g$. It allows us to get semidefinite approximations of size
$O(\sqrt{\log(1/\epsilon)})$ if $\Phi$ contracts at a linear rate, and 
$O(\log \log (1/\epsilon))$ if $\Phi$ contracts quadratically, where $\epsilon$ is the approximation accuracy.
\begin{theorem}
	\label{thm:phi}
	Let $g,h_1,h_2:\RR_{++}\rightarrow \RR_{++}$ be operator monotone (and hence operator concave) 
	functions such that 
	\[P_{g}(P_{h_1}(x,y),P_{h_2}(x,y)) = P_g(x,y)\qquad\textup{for all $x,y\in \RR_{++}$}.\]
	Suppose that $h_1$ and $h_2$ are semidefinite representable.
	
	If there exists a constant $c>1$ such that 
	\begin{equation}
	\label{eq:linear-phi}
	 \left|\log\left(\frac{P_{h_1}(x,y)}{P_{h_2}(x,y)}\right)\right| \leq
	\frac{1}{c} \left|\log\left(\frac{x}{y}\right)\right|\qquad\textup{for all $x,y\in \RR_{++}$}
	\end{equation}
	then for any $a>1$ and any $\epsilon > 0$ there is a function $r$ such
that $|r(x)-g(x)| \leq \epsilon$ for all $x\in [1/a,a]$ and $r$ has a
semidefinite representation of size $O(\sqrt{\log_c(1/\epsilon)})$.
	
	If, in addition, there exists a constant $c_0 > 1$ such that
	\begin{equation}
	\label{eq:quadratic-phi}
	 \left|\log\left(\frac{P_{h_1}(x,y)}{P_{h_2}(x,y)}\right)\right| 
		\leq \frac{1}{c_0} \left|\log\left(\frac{x}{y}\right)\right|^2\qquad\textup{for all $x,y\in \RR_{++}$}
	\end{equation}
	then for any $a>1$ and any $\epsilon > 0$ there is a function $r$ such
that $|r(x)-g(x)| \leq \epsilon$ for all $x\in [1/a,a]$ and $r$ has a
semidefinite representation of size $O(\log_{2}\log_{c_0}(1/\epsilon))$. 
\end{theorem}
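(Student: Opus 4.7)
The plan is to take as approximating function $r_{m,k}$ the one defined by $P_{r_{m,k}} = P_{r_m^+} \circ \Phi^{(k)}$, exactly as in Section~\ref{sec:phi-structure}. Specializing to $y=1$, write $(x_k, y_k) := \Phi^{(k)}(x,1)$ and $\xi_k := x_k/y_k$, so that $r_{m,k}(x) = y_k\, r_m^+(\xi_k)$. Using the assumed functional equation $P_g\circ \Phi = P_g$,
\[
g(x) - r_{m,k}(x) \;=\; P_g(x,1) - P_{r_m^+}(x_k,y_k) \;=\; P_g(x_k,y_k) - P_{r_m^+}(x_k,y_k) \;=\; y_k\bigl[g(\xi_k) - r_m^+(\xi_k)\bigr].
\]
This reduces the theorem to three ingredients: (i) bounding $|\log \xi_k|$ via the contraction hypothesis, (ii) bounding $y_k$, and (iii) bounding the scalar Gauss-quadrature error $|g(\xi) - r_m^+(\xi)|$ near $\xi = 1$.

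For (i), iterating hypothesis~\eqref{eq:linear-phi} gives $|\log \xi_k| \le c^{-k}|\log x| \le c^{-k}\log a$; under the additional hypothesis~\eqref{eq:quadratic-phi}, one performs $k_0 = O(1)$ preliminary linear iterations to force $|\log \xi_{k_0}| < c_0$, after which the quadratic bound yields $|\log \xi_k| \le c_0\,\rho^{\,2^{k-k_0}}$ for some $\rho \in (0,1)$ depending on $a,c,c_0$. For (ii), note that $P_g(x_k,y_k) = g(x)$ remains bounded above by $g(a)$ and below by $g(1/a)$ on $x \in [1/a,a]$, while $\xi_k \to 1$ and $g$ is positive and operator monotone; hence $y_k g(\xi_k) = g(x)$ keeps $y_k$ bounded in terms of $a$ alone. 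For (iii), I would adapt the argument used for the logarithm in Appendix~\ref{app:pade}: $r_m^+$ arises from applying $m$-node Gaussian quadrature to the L\"owner-type integral representation~\eqref{eq:int-fplus}, and thus coincides with a Pad\'e-type rational approximant of $g$ at $\xi = 1$ that matches $2m{+}1$ Taylor coefficients. The resulting error bound takes the form $|g(\xi) - r_m^+(\xi)| \le C\,|\log \xi|^{2m+1}$ for $\xi$ in a neighborhood of $1$, with $C$ depending on $g$ and $a$.

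Combining the three pieces, $|g(x) - r_{m,k}(x)| \le C'\,|\log \xi_k|^{2m+1}$. In the linear-contraction case this becomes $C'(c^{-k}\log a)^{2m+1}$, so asking this to be $\le \epsilon$ is ensured by $(2m{+}1)\,k\,\log c \gtrsim \log(1/\epsilon)$; balancing $m\approx k$ gives $m,k = O\bigl(\sqrt{\log_c(1/\epsilon)}\bigr)$. In the quadratic case the error is at most $C''\rho^{(2m+1)\,2^{k-k_0}}$, and one can take $m = O(1)$ together with $k = O\bigl(\log_2\log_{c_0}(1/\epsilon)\bigr)$. The size bound on the semidefinite representation follows directly from Section~\ref{sec:phi-structure}, which exhibits $K_{r_{m,k}}$ as an SDP of size $2m + k(s_1 + s_2) = O(m+k)$ using the assumed semidefinite representations of $h_1$ and $h_2$.

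The main obstacle I expect is step (iii): obtaining a clean near-$1$ decay estimate $|g(\xi) - r_m^+(\xi)| \lesssim |\log \xi|^{2m+1}$ that holds uniformly for all positive operator monotone $g$ admitting the representation~\eqref{eq:int-fplus}. A generic Gaussian quadrature bound only yields geometric decay $\rho^{m}$ with fixed $\rho < 1$, which would completely obliterate the gain from $\Phi^{(k)}$ contracting toward the diagonal; to reach $O(\sqrt{\log_c(1/\epsilon)})$ rather than $O(\log(1/\epsilon))$, one really needs an error bound whose base is the small quantity $|\log \xi_k|$. Isolating this Pad\'e-type behavior at $\xi = 1$, by working directly with the Chebyshev/Jacobi expansion of $t \mapsto f_t^+(\xi)$ in the measure $\mu$ as a function of $\xi$ (analogously to the logarithm case in Appendix~\ref{sec:log-err}), is the technical heart of the argument.
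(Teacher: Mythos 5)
Your proposal follows the same overall route as the paper's proof in Appendix~\ref{appsec:pf-phi}: the same approximant $P_{r_{m,k}} = P_{r_m^+}\circ\Phi^{(k)}$, the same reduction $g(x)-r_{m,k}(x) = y_k\bigl[g(\xi_k)-r_m^+(\xi_k)\bigr]$ via the functional equation, the same quadrature error bound \eqref{eq:mu-bound} whose base is $|\tanh(\tfrac14\log\xi)|\le\tfrac14|\log\xi|$ rather than a fixed $\rho<1$ --- you correctly identify this as the crux, and it is exactly what Proposition~\ref{prop:err-monotone} and \eqref{eq:mu-bound} supply via the Chebyshev expansion \eqref{eq:rational-cheb} (the exponent there is $2m$ rather than your $2m+1$, which is immaterial) --- and the same balancing $m\asymp k$ (resp.\ $m=O(1)$, $k=O(\log_2\log_{c_0}(1/\epsilon))$ in the quadratic case). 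The one place you genuinely diverge is ingredient (ii), the control of $y_k$ (equivalently of $\sqrt{x_ky_k}$). The paper proves a lemma, using only concavity and positivity of $h_1,h_2$, that $x_k+y_k\le b^k(x+y)$ for a constant $b$, and must then take $m\gtrsim k\log b$ to absorb the resulting $b^k$ factor. You instead observe that the functional equation itself forces $y_k\,g(\xi_k)=P_g(x_k,y_k)=g(x)$, so that $y_k=g(x)/g(\xi_k)\le g(a)/g(1/a)$ is bounded by a constant depending only on $a$ and $g$ (using positivity and monotonicity of $g$ together with $\xi_k\in[1/a,a]$, which follows from the contraction hypothesis). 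This is correct, simpler, and slightly stronger: it removes the $b^k$ term entirely and lets you take $m=O(1)$ in the quadratic case with no constraint tying $m$ to $k$. The trade-off is that the paper's lemma depends only on $h_1,h_2$ and not on $g$; but since $g$ is assumed positive in this theorem, your shortcut is legitimate, and the asymptotic sizes claimed are unaffected either way.
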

\begin{proof}
	We provide a proof in Appendix~\ref{appsec:pf-phi}. In each case we
choose $r$ to be of the form $r_{m,k}(x) = P_{r_{m,k}}(x,1)$ (defined
in~\eqref{eq:rmkphi}) for sufficiently large $m$ and $k$, and use the fact that
$r_{m,k}$ has a semidefinite representation of size $O(m+k)$. 
\end{proof}
\begin{remark}
The condition~\eqref{eq:linear-phi} says that 
$d_H(\Phi(x,y),(1,1)) \leq c^{-1}d_H((x,y),(1,1))$ where $d_H(\cdot,\cdot)$ is the \emph{Hilbert metric}
on rays of the cone $\RR_{++}^{2}$ (see, e.g.,~\cite{bushell1973hilbert}). This
is the precise sense in which $\Phi$ maps points `closer' to the ray generated
by $(1,1)$.  
\end{remark}

We now apply the theorem to the logarithmic mean and the arithmetic-geometric mean.
\paragraph{Logarithmic mean}
In this case $g(x) = \frac{x-1}{\log(x)}$,  $h_1(x) = (x+\sqrt{x})/2$, and $h_2(x) = (1+\sqrt{x})/2$.
By a direct computation we see that  
\[ \left|\log\left(\frac{P_{h_1}(x,y)}{P_{h_2}(x,y)}\right)\right| = 
	\left|\log\left(\frac{x+\sqrt{xy}}{y+\sqrt{xy}}\right)\right| = 
	\left|\log\left(\sqrt{\frac{x}{y}}\cdot \frac{\sqrt{x}+\sqrt{y}}{\sqrt{y}+\sqrt{x}}\right)\right| = 
	\frac{1}{2}\left|\log\left(\frac{x}{y}\right)\right|\quad\textup{for all $x,y>0$}.\]
Theorem~\ref{thm:phi} tells us that given $a>1$, there is a function
$r:\RR_{++}\rightarrow \RR_{++}$ with a semidefinite representation of size
$O(\sqrt{\log_2(1/\epsilon)})$, such that $|r(x) - (x-1)/\log(x)|\leq \epsilon$ for all $x \in [1/a,a]$. 

\paragraph{Arithmetic-Geometric mean}
In this case 
$g(x) = \textup{AGM}(x,1)$,  $h_1(x) = (x+1)/2$, and $h_2(x) = \sqrt{x}$. Then 
\[ \left|\log\left(\frac{P_{h_1}(x,y)}{P_{h_2}(x,y)}\right)\right| = \left|\log\left(\frac{x+y}{\sqrt{xy}}\right)\right| = 
	\left|\log \cosh\left(\frac{1}{2}\log(x/y)\right)\right|\quad\textup{for all $x,y>0$}.\]
Furthermore, since $\log\cosh(z) \leq |z|$ for all $z$ and $\log\cosh(z) \leq z^2/2$ for all $z$, it follows that 
\[ \left|\log\left(\frac{P_{h_1}(x,y)}{P_{h_2}(x,y)}\right)\right| \leq \frac{1}{2}\left|\log\left(\frac{x}{y}\right)\right|\quad\textup{and}\quad
 \left|\log\left(\frac{P_{h_1}(x,y)}{P_{h_2}(x,y)}\right)\right| \leq \frac{1}{8}\left|\log\left(\frac{x}{y}\right)\right|^2\quad\textup{for all $x,y>0$.} \]
Theorem~\ref{thm:phi} tells us that given $a>1$, there is a function $r:\RR_{++}\rightarrow \RR_{++}$ with 
a semidefinite representation of size $O(\log_2\log_8(1/\epsilon))$ such that $|r(x) - \textup{AGM}(x,1)|\leq \epsilon$ for all $x\in [1/a,a]$. 

\begin{remark}
	As stated, both the construction of the functions $r_{m,k}$ in
Section~\ref{sec:phi-structure}, and the statement of Theorem~\ref{thm:phi}, are only valid when $g$ takes
positive values. If $g$ is operator monotone but not positive-valued (as is the
case for the logarithm), similar results apply if certain modifications are made.
First, the rational functions $r_m$ (from Section~\ref{sec:lowner}) should be used in place of $r_{m}^+$
in~\eqref{eq:rmkphi}. Second, we need the additional assumption that the second argument of $\Phi$ is linear (i.e., $h_2(x)$ is affine).
This is required because $P_{r_m}$ is, in general, not monotone in its second argument.
\end{remark}

\section{Numerical experiments}
\label{sec:numerical_experiments}

We first evaluate our approximation method for the scalar relative entropy
cone, and compare it with the successive approximation scheme of \CVX{}, to solve
maximum entropy problems and geometric programs. To assess the quality
of the returned solutions, we use the solver \Mosek{}~\cite{mosek}, which has a
dedicated routine for entropy problems and geometric programming
(\texttt{mskenopt} and \texttt{mskgpopt} respectively). Note, however, that this solver only
deals with scalar problems, and has no facility for matrix problems involving
quantum relative entropy, for instance. To evaluate our method for matrices, we
test it on a variational formula for trace. More numerical experiments using
\cvxpade{} related to problems in quantum information theory appear in
\cite{quantumopt}.

\subsection{Entropy problems}

We consider optimization problems of the form
\begin{equation}
\label{eq:maxent}
\begin{array}{ll}
\text{maximize} & -\sum_{i=1}^n x_i \log(x_i)\\
\text{subject to} & Ax = b\\
                  & x \geq 0
\end{array}
\qquad
(A \in \RR^{\ell \times n}, b \in \RR^{\ell})
\end{equation}
and we compare the performance of our method with the successive approximation
scheme implemented in \CVX{}. Table \ref{tbl:maxentresults} shows the results of
the comparison for randomly generated data $A \in \RR^{\ell \times n}$ and $b
\in \RR^{\ell}$ of different sizes. We use the solution returned by the
built-in maximum entropy solver in \Mosek{} (\texttt{mskenopt}) as ``true
solution'' and we measure the quality of either approximation method
(successive approximation or ours) via the gap between optimal values. We use
the notation $p_{sa}$ and $p_{Pade}$ respectively for the optimal values
returned by the successive approximation scheme and our method.

\begin{table}[ht]
\centering
\begin{tabular}{ll|cc|cc|c}
\toprule
     &    & \multicolumn{2}{p{4.5cm}|}{\centering Successive approximation\newline (CVX)} & \multicolumn{2}{|p{4cm}|}{\centering Pad{\'e} approximation\newline (this paper)} & \\
          \cmidrule(r){3-6}
$n$ & $\ell$ & time (s) & accuracy & time (s) & accuracy & $|p_{sa} - p_{Pade}|$\\ 
\midrule
50	 & 25	& 0.34 s	& 1.065e-05 & 0.32 s & 1.719e-06 & 8.934e-06\\
100	 & 50	& 0.52 s	& 1.398e-06 & 0.34 s & 2.621e-06 & 1.222e-06\\
200	 & 100	& 1.10 s	& 6.635e-06 & 0.88 s & 2.767e-06 & 3.868e-06\\
400	 & 200	& 3.38 s	& 2.662e-05 & 0.72 s & 1.164e-05 & 1.498e-05\\
600	 & 300	& 9.14 s	& 2.927e-05 & 1.84 s & 2.743e-05 & 1.843e-06\\
1000 & 500	& 52.40 s	& 1.067e-05 & 3.91 s & 1.469e-04 & 1.362e-04\\
\bottomrule
\end{tabular}
\caption{Maximum entropy optimization \eqref{eq:maxent} via our method and the
successive approximation scheme of \CVX{} on different random instances. We see
that our method can be much faster than the successive approximation method
while having the same accuracy. The accuracy of the different methods is
measured via the difference $|p-p_{\Mosek{}}|$ where $p_{\Mosek{}}$ is the optimal
value returned by the built-in \Mosek{} solver for maximum entropy problems
(\texttt{mskenopt}), and $p$ is the optimal value returned by the considered
approximation method.  For our method we used the parameters $(m,k) = (3,3)$.
The last column also gives the gap between the optimal value returned by the
two different approximation methods.}
\label{tbl:maxentresults}
\end{table}

\subsection{Geometric programming}

We consider now geometric programming \cite{boyd2007tutorial} problems of the form:
\begin{equation}
\label{eq:gp}
\begin{array}{ll}
\text{minimize} & \sum_{k=1}^{w_0} c_{0,k} x^{a_{0,k}}\\
\text{subject to} & \sum_{k=1}^{w_j} c_{j,k} x^{a_{j,k}} \leq 1, \quad j=1,\dots,\ell\\
                  & x \geq 0
\end{array}
\end{equation}
where $x \in \RR^n$ is the decision variable. For $a \in \RR^n_{++}$ the
notation $x^a$ indicates $x^a := \prod_{i=1}^n x_i^{a_i}$. The coefficients
$c_{j,k}$ are assumed to be positive. Such problems can be converted into conic
problems over the relative entropy (exponential) cone using the change of
variables $y_i = \log x_i$. The current version of \CVX{} (\CVX{} 2.1) uses the
successive approximation technique to deal with such problems. Our method based
on Pad{\'e} approximations can also be used in this case to obtain accurate
approximations. We note that the solver \Mosek{} has a dedicated routine for
geometric programming (\texttt{mskgpopt}).

Table \ref{tbl:gp} shows a comparison of our method with the successive
approximation  method for randomly generated instances of \eqref{eq:gp}. The
instances were generated using the \texttt{mkgp} script contained in the
\text{ggplab} package available at \url{https://stanford.edu/~boyd/ggplab/}.

\begin{table}[ht]
\centering
\begin{tabular}{lll|cc|cc|c}
\toprule
     &    & & \multicolumn{2}{p{4.5cm}|}{\centering Successive
approximation\newline (CVX)} & \multicolumn{2}{|p{4cm}|}{\centering Pad{\'e}
approximation\newline (this paper)} & \\
          \cmidrule(r){4-7}
$n$ & $\ell$ & sp & time (s) & accuracy & time (s) & accuracy & $|p_{sa} - p_{Pade}|$\\
\midrule
 50	& 50	& 0.3 &  1.28 s		& 2.509e-07	&   0.94 s	& 2.106e-06	& 1.856e-06\\
50	& 100	& 0.3 &  1.78 s		& 2.045e-05	&   1.03 s	& 3.122e-05	& 1.077e-05\\
100	& 100	& 0.1 &  1.57 s		& 4.759e-06	&   1.16 s	& 5.197e-06	& 4.383e-07\\
100	& 150	& 0.1 &  3.60 s		& 8.484e-06	&   1.60 s	& 2.240e-06	& 6.244e-06\\
100	& 200	& 0.1 &  7.60 s		& 1.853e-06	&   2.69 s	& 3.769e-06	& 1.916e-06\\
200	& 200	& 0.1 &  7.47 s		& 2.441e-07	&   3.72 s	& 7.505e-07	& 9.945e-07\\
200	& 400	& 0.1 &  42.71 s	& 3.666e-06	&   14.36 s	& 2.855e-06	& 6.521e-06\\
200	& 600	& 0.1 &  184.33 s   & 7.899e-06	&   35.45 s	& 4.480e-06	& 3.419e-06\\
\bottomrule
\end{tabular}
\caption{Geometric programming \eqref{eq:gp} using our method (with $(m,k) = (3,3)$) and the
successive approximation scheme of CVX, on different random instances. The
column ``\text{sp}'' indicates the sparsity of the power vectors $a_{j,k}$
(i.e., how many variables appear in each monomial terms). Also we used $w_0 =
w_1 = \dots = w_{\ell} = 5$ (i.e., the posynomial objective as well as the
posynomial constraints all have 5 terms). 
Accuracy is measured via absolute error between the optimal value returned by the approximation 
and the built-in \Mosek{} solver for
geometric programs (\texttt{mskgpopt}).
}
\label{tbl:gp}
\end{table}

\subsection{Variational formula for trace}

We now evaluate our method for matrix functions. We consider the following
variational expression for the trace function which appears in \cite[Lemma
6]{tropp2012joint}. For any $Y \succ 0$
\begin{equation}
\label{eq:variationaltrace}
\Tr[Y] = \max_{X \succ 0} (\Tr[X] - D(X\|Y))
\end{equation}
where $D$ is the quantum relative entropy function \eqref{eq:quantumrelentr}.
We generate random positive definite matrices $Y$ and compare the solution of
the right-hand side of \eqref{eq:variationaltrace} with $\Tr[Y]$. The
right-hand side of \eqref{eq:variationaltrace} can be implemented using the
\CVX{} code shown in Table \ref{tbl:variationaltrace}. The results of running
this piece of code using solver SDPT3 are shown in Table
\ref{tbl:variationaltrace}.

\begin{table}[ht]
\centering
\begin{minipage}{0.60\textwidth}
\begin{lstlisting}
cvx_begin
  variable X(n,n) symmetric
  maximize (trace(X) - quantum_rel_entr(X,Y))
cvx_end
\end{lstlisting}
\end{minipage}
\quad
\begin{minipage}{0.25\textwidth}
\begin{tabular}{lll}
\toprule
$n$ & time (s) & accuracy\\
\midrule
5	& 2.37 s	& 1.143e-06\\
10	& 4.32 s	& 2.844e-06\\
15	& 9.56 s	& 4.732e-06\\
20	& 24.39 s	& 7.537e-06\\
25	& 77.02 s	& 9.195e-06\\
30	& 163.07 s	& 1.290e-05\\
\bottomrule
\end{tabular}
\end{minipage}
\caption{Result of solving the optimization problem \eqref{eq:variationaltrace}
for different Hermitian positive definite matrices $Y$ of size $n\times n$ with
$\Tr[Y]=1$. The problems were implemented using \CVX{} as shown above and solved
using SDPT3. The accuracy column reports the quantity $|p-1|$ where $p$ is the
optimal value returned by the solver (note that the matrix $Y$ is sampled to
have trace one).}
\label{tbl:variationaltrace}
\end{table}

\section{Discussion}
\label{sec:discussion}

\paragraph{Lower bounds} It would be interesting to know what is the
smallest possible second-order cone program that can approximate logarithm to
within a fixed $\epsilon > 0$. To formalize this question, 
let $\mathcal{F}_s$ be the class of concave
functions on $\RR_{++}$ that admit a second-order cone representation of size
at most $s$.
\begin{equation}
\label{eq:axc}
\begin{minipage}{0.8\textwidth}
Given $\epsilon > 0$ what is the smallest $s=s(\epsilon)$ such that there exists $f \in \mathcal{F}_s$ with $\max_{x \in [1/e,e]} |f(x) - \log(x)| \leq \epsilon$?
\end{minipage}
\end{equation}
Recall, from Theorem~\ref{thm:epserrorscalar}, that our construction 
yields $s(\epsilon) = O(\sqrt{\log(1/\epsilon)})$. This
rate results from the combination of Pad\'e approximation with successive
square rooting.
 It would be interesting to produce lower bounds on
$s(\epsilon)$.

More generally one can define a notion of \emph{$\epsilon$-approximate
extension complexity} of a concave function $g:[a,b]\rightarrow \RR$ in a
similar way as \eqref{eq:axc}. Well-known results in classical approximation
theory relate the approximation quality using polynomials and rational
functions of given degree to the smoothness of $g$. A natural question is to
understand what corresponding properties of a concave function make it more or
less difficult to approximate using second-order programs. We have phrased the
question here in terms of second-order cone representations for concreteness
but the same question for linear programming and semidefinite programming can
also be considered.

\paragraph{Smaller semidefinite approximations for quantum relative entropy}
The approximations for the epigraph of the quantum relative entropy $D(A\|B)$
we constructed in Section~\ref{sec:qre} involve linear matrix inequalities of
size $O(n^2)$ (where $n$ is the size of the matrices $A,B$).  Is it possible to
obtain approximations, of similar quality, to the quantum relative entropy
using linear matrix inequalities of size $O(n)$? 

\paragraph{Self-concordant barriers for the operator relative entropy cone}
A natural approach to conic optimization over the scalar relative entropy cone
(or, equivalently, the exponential cone) is to use an interior point method
that works directly with an efficiently computable self-concordant barrier for
the cone (such as the barrier introduced by Nesterov~\cite{nesterov2006constructing}). Examples of such solvers include the extension of
ECOS~\cite{domahidi2013ecos} to the exponential cone~\cite{serrano2015algorithms}, and the 
solver developed by Skajaa and Ye~\cite{skajaa2015homogeneous}. 
We are not aware, however, of any barrier for the operator relative
entropy cone that is known to be efficiently computable and self-concordant. 
If we had such a barrier, it could be used directly to solve conic
optimization problems over the operator relative entropy cone using interior point methods, 
as an alternative to the semidefinite approximation-based approaches developed in this paper.

\paragraph{Approximating other families of convex functions via quadrature} One
of the basic ideas of this paper is that if we can express a convex (or
concave) function as $g(x) = \int_{\alpha}^{\beta} K(x,t)\;d\mu(t)$, where
$x\mapsto K(x,t)$ has a simple semidefinite representation for fixed $t$, then
we can obtain a semidefinite approximation of $g$ by quadrature. Operator
monotone functions on $\RR_{++}$, such as the logarithm, are just one class of
functions with such a representation.  Other such families of functions include
Stieltjes functions, and certain hypergeometric functions.  For instance
Stieltjes functions on $\RR_{++}$ have the form $g(x) = \int_{0}^{\infty}
\frac{1}{x+t}d\mu(t)$. Hypergeometric functions ${}_2F_1(a,b;c;x)$ for $x<1$
and $b,c>0$ have the form ${}_2F_1(a,b;c;x) = \frac{1}{B(b,c-b)}\int_0^1t^{b-1}(1-t)^{c-b-1}(1-xt)^{-a}\;dt$ where
$B(\cdot,\cdot)$ is the beta function. We expect such integral representations
to be helpful in the study of approximate extension complexity of functions.  

\paragraph{Free semidefinite representation} The semidefinite representation
given in this paper of the hypograph of $f_t$ (see \eqref{eq:sdprepft1}) is a
``free linear matrix inequality'' representation in the sense of~\cite{helton2017tracial}. 
This is one reason why our representations also work
for the noncommutative perspective of $f_t$. In fact one can show that if an
operator concave function $f$ admits a free linear matrix inequality
representation, then the noncommutative perspective of $f$ also has a free
linear matrix inequality representation. An interesting question would be to
understand the class of operator concave functions that admit a free LMI
representation.

\clearpage
\newpage

\appendix

\section{Background on approximation theory}
\label{sec:background}

\paragraph{Gaussian quadrature} 
A \emph{quadrature rule} is a method of approximating an integral with a weighted sum of evaluations of the integrand. 
A quadrature rule is determined by the evaluation points, called \emph{nodes}, and the \emph{weights} of the weighted sum. 
Given a measure $\nu$ supported on $[-1,1]$, a quadrature rule gives an approximation of the form
\begin{equation}
	\label{eq:gauss-quad} \int_{-1}^{1}h(t)\;d \nu(t) \approx \sum_{j=1}^{m} w_jh(t_j)
\end{equation}
where the $t_j\in [-1,1]$ are the nodes and the $w_j$ are the weights.  A
Gaussian quadrature is a choice of nodes $t_1,\ldots,t_m\in (-1,1)$ and
positive weights $w_1,\ldots,w_m$  that integrates all polynomials of degree at
most $2m-1$ \emph{exactly}.  For example, when $\nu$ is the uniform measure on
$[-1,1]$, such a quadrature rule is known as \emph{Gauss-Legendre} quadrature, and the nodes
and weights can be computed for example by an eigenvalue decomposition of the
associated Jacobi matrix, see e.g., \cite[Section 2]{trefethen2008gauss}.

\paragraph{Pad{\'e} approximants} 
Pad\'e approximants are approximations of a given univariate function, analytic
at a point $x_0$, by rational functions. More precisely, the $(m,n)$-Pad\'e
approximant of $h$ at $x_0$ is the rational function $p(x)/q(x)$ such that $p$ is
a polynomial of degree $m$, $q$ is a polynomial of degree $n$, and the Taylor
series expansion of the error at $x_0$ is of the form
\[ h(x) - \frac{p(x)}{q(x)} = (x-x_0)^{s}\sum_{k\geq 0}a_k(x-x_0)^k\]
for real numbers $a_k$ and the largest possible positive integer\footnote{This last requirement is to ensure uniqueness.} 
$s$. Expressed
differently, $p$ and $q$ are chosen so that the Taylor series of $p(x)/q(x)$ at
$x_0$ matches as many Taylor series coefficients of $h$ at $x_0$ as possible
(and at least the first $m+n+1$ coefficients).

\section{Properties and error bounds of Gaussian quadrature-based approximations}
\label{app:pade}

Assume $g:\RR_{++}\rightarrow \RR$ is a function with an integral representation
\begin{equation}
\label{eq:gintrep}
g(x) = g(1) + g'(1)\int_{0}^1 f_t(x) d\nu(t)
\end{equation}
where $\nu$ is a probability measure on $[0,1]$ and $f_t(x) =
\frac{x-1}{1+t(x-1)}$. The case $g=\log$ corresponds to $\nu$ being the
Lebesgue measure on $[0,1]$. In this appendix we show that the rational
approximation obtained by applying Gaussian quadrature on \eqref{eq:gintrep}
coincides with the Pad\'e approximant of $g$ at $x=1$. We also derive error
bounds on the quality of this rational approximation.  Note that functions of
the form \eqref{eq:gintrep} are precisely operator monotone functions, by
L\"owner's theorem (see Section \ref{sec:lowner}). 

Let $r_m$ be the rational approximant obtained by using Gaussian quadrature on \eqref{eq:gintrep}:
\begin{equation}
\label{eq:rmpade}
r_m(x) = g(1) + g'(1)\sum_{i=1}^m w_i f_{t_i}(x)
\end{equation}
where $w_i > 0, t_i \in [0,1]$ are the Gaussian quadrature weights and nodes for the measure $\nu$.

\subsection{Connection with Pad\'e approximant}

We first show that the function $r_m$ coincides with the Pad\'e approximant of
$g$ at $x=1$. The special case $g =\log$ was established in~\cite[Theorem
4.3]{dieci1996computational}.
\begin{proposition}
	Assume $g:\RR_{++}\rightarrow \RR$ has the form \eqref{eq:gintrep} and let $r_m$ be the rational approximation obtained via Gaussian quadrature as in \eqref{eq:rmpade}. Then $r_m$ is the $(m,m)$ Pad\'e approximant of $g$ at $x=1$.
\end{proposition}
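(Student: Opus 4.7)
My plan is to match Taylor coefficients directly. Setting $y = x-1$, the geometric series expansion gives
\[ f_t(x) = \frac{y}{1+ty} = \sum_{k=0}^{\infty} (-1)^k t^k y^{k+1}, \]
which is valid in a neighborhood of $x=1$. Integrating against $\nu$ and using this in~\eqref{eq:gintrep}, the Taylor expansion of $g$ at $x=1$ takes the form
\[ g(x) = g(1) + g'(1) \sum_{k=0}^{\infty} (-1)^k \mu_k\, y^{k+1}, \qquad \mu_k := \int_0^1 t^k\, d\nu(t). \]
Performing the same expansion for each summand of $r_m$ in~\eqref{eq:rmpade} yields
\[ r_m(x) = g(1) + g'(1) \sum_{k=0}^{\infty} (-1)^k \Big(\sum_{i=1}^m w_i t_i^k\Big) y^{k+1}. \]

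Next I would invoke the defining property of $m$-point Gaussian quadrature with respect to $\nu$: the weights and nodes are chosen so that $\sum_{i=1}^m w_i t_i^k = \mu_k$ for all $k = 0, 1, \ldots, 2m-1$. Comparing the two power series term by term, this implies that the coefficients of $y^{k+1}$ in $g(x)$ and $r_m(x)$ agree for $k=0,1,\ldots,2m-1$, and the constant terms trivially agree. So $g(x) - r_m(x) = O((x-1)^{2m+1})$ as $x \to 1$.

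It remains to verify that $r_m$ is a rational function of type $(m,m)$, i.e., has numerator and denominator of degree at most $m$. Putting the $m$ terms $w_i f_{t_i}(x)$ over the common denominator $D(x) = \prod_{i=1}^m (1+t_i(x-1))$, which has degree $\leq m$, the sum becomes $(x-1)N(x)/D(x)$ where $N(x) = \sum_i w_i \prod_{j\neq i}(1+t_j(x-1))$ has degree $\leq m-1$. Thus $r_m(x) = [g(1)D(x) + g'(1)(x-1)N(x)]/D(x)$ is of type $(m,m)$.

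Finally, by the uniqueness of the $(m,m)$ Padé approximant — a rational function of type $(m,m)$ is uniquely determined by matching the first $2m+1$ Taylor coefficients at $x_0$ — we conclude that $r_m$ is precisely the $(m,m)$ Padé approximant of $g$ at $x=1$. The only mild subtlety is confirming that $D(1) = 1 \neq 0$ (so the type-$(m,m)$ representation is non-degenerate at $x_0=1$), which is immediate, and that uniqueness applies, which follows from standard Padé theory once one has a representative with nonvanishing denominator at the expansion point.
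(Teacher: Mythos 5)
Your proof is correct and follows essentially the same route as the paper: expand $f_t$ as a geometric series in $x-1$, use the moment-matching property of $m$-point Gaussian quadrature to conclude the Taylor coefficients of $g$ and $r_m$ agree through order $2m$, and identify $r_m$ as the $(m,m)$ Pad\'e approximant. Your added checks (the degree count for $r_m$ via the common denominator, and the non-vanishing of the denominator at $x=1$ needed to invoke uniqueness) are details the paper leaves implicit, but the argument is the same.
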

\begin{proof}
First we note that $f_t(x)$ admits the following series expansion, valid for $|x-1|<\frac{1}{|t|}$: 
\begin{align*}
	f_t(x) =&  \frac{x-1}{t(x-1)+1} = (x-1)\sum_{k=0}^{\infty} (-1)^kt^k(x-1)^k.
\end{align*}
Let $\nu_m = \sum_{i=1}^m w_i \delta_{t_i}$ be the atomic measure on $[0,1]$
corresponding to Gaussian quadrature applied to $\nu$. By definition of
Gaussian quadrature, $\nu_m$ matches all moments of $\nu$ up to degree $2m-1$,
i.e., $\int_{0}^{1}p(t)\;d\nu(t) = \int_{0}^{1}p(t)\;d\nu_m(t)$ for all
polynomials $p$ of degree at most $2m-1$.  It thus follows that
\[ g(x) - r_m(x) = g'(1)(x-1) \sum_{k=2m}^{\infty} (-1)^k(x-1)^{k}
	\left[\int_{0}^{1}t^k\;d\nu(t) - \int_{0}^{1}t^k\;d\nu_m(t)\right],\]
establishing that $r_m$ matches the first $2m$ Taylor series coefficients of
$g$ at $x=1$. Since $r_m$ has numerator and denominator degree $m$, it is the
$(m,m)$-Pad\'e approximant of $g$ at $x=1$. 
\end{proof}

\subsection{Error bounds}
\label{appsec:errorbounds}

In this section we derive an error bound on the approximation quality 
$|g(x) - r_m(x)|$. To do this we use standard methods as described, e.g., in
\cite{ATAPbook}. This error is essentially controlled by the decay of the
Chebyshev coefficients of the integrand. For the rational functions $f_t$ one
can compute these coefficients exactly.

\subsubsection{Quadrature error bounds for operator monotone functions}

To appeal to standard arguments, it is easiest to rewrite the integrals of interest
over the interval $[-1,1]$ by the transformation $t\mapsto 1-2t$ mapping $[0,1]$ to $[-1,1]$. To this end, let
\[ \tilde{f}_t(x) := f_{\frac{1-t}{2}}(x)= \frac{2}{\frac{x+1}{x-1}-t}.\]

Let $T_k(t)$ denote the $k$th Chebyshev polynomial. We start by explicitly computing the Chebyshev expansion
of $\tilde{f}_t(x)$ for fixed $x$, i.e., we find the coefficients $a_k(x)$ of  
$\tilde{f}_t(x) = \sum_{k=0}^{\infty}a_k(x)T_k(t)$.
To do this, we first define $h_\rho(t) = \frac{2}{(\rho+\rho^{-1})/2 - t}$ and 
observe that with the substitution $\rho = \frac{\sqrt{x}-1}{\sqrt{x}+1}$ we have that
$\tilde{f}_t(x) = h_\rho(t)$ and that  $x>0$ if and only if $-1<\rho<1$. 
We can compute the Chebyshev expansion of $h_\rho(t)$ by observing that the generating function of Chebyshev polynomials is (see e.g., \cite[Exercise 3.14]{ATAPbook})
\[ \sum_{k=0}^{\infty}\rho^kT_k(t) = \frac{1-\rho t}{1-2\rho t + \rho^2} = \frac{1}{2} + \frac{\rho^{-1}-\rho}{8}h_\rho(t).\]
It then follows that the Chebyshev expansion of $h_\rho(t)$ is 
\begin{equation}
	\label{eq:cheb-rho}
	h_\rho(t) = \frac{2}{(\rho+\rho^{-1})/2 - t} = \frac{8}{\rho^{-1}-\rho}\left[\frac{1}{2} + \sum_{k=1}^{\infty} \rho^k T_k(t)\right].
\end{equation}
Since  $\frac{8}{\rho^{-1}-\rho} = 2(\sqrt{x} - 1/\sqrt{x})$, the Chebyshev expansion of $\tilde{f}_t(x)$ is 
\begin{equation}
	\label{eq:rational-cheb}
	\tilde{f}_t(x) = \frac{2}{\frac{x+1}{x-1}-t} = 
	2\left(\sqrt{x} - 1/\sqrt{x}\right)\left[\frac{1}{2} + \sum_{k=1}^{\infty} \left(\frac{\sqrt{x}-1}{\sqrt{x}+1}\right)^k T_k(t)\right].
\end{equation}

We are now ready to state an error bound on the approximation quality $|g(x) -
r_m(x)|$. Our arguments are standard, and follow closely the ideas described
in~\cite{ATAPbook}. 
\begin{proposition}
	\label{prop:err-monotone}
	Let $g:\RR_{++}\rightarrow \RR$ be a function with an integral
representation \eqref{eq:gintrep} and let $r_m$ be the rational approximation
obtained by applying Gaussian quadrature as in \eqref{eq:rmpade}.
	If $m\geq 1$ and $x>0$ then
	\begin{equation}
	\label{eq:nu-bound1}
	 \left|g(x) - r_m(x)\right| \leq 4g'(1)|\sqrt{x}-1/\sqrt{x}|\frac{\left|\frac{\sqrt{x}-1}{\sqrt{x}+1}\right|^{2m}}{1-\left|\frac{\sqrt{x}-1}{\sqrt{x}+1}\right|}.
	\end{equation}
	If $\nu$ is invariant under the map $t\mapsto 1-t$ (i.e., $g(x^{-1}) = -g(x)$) then this can be improved to
	\begin{equation}
	\label{eq:nu-bound2} \left|g(x) - r_m(x)\right| \leq g'(1)\left|\sqrt{x} - 1/\sqrt{x}\right|^2\left|\frac{\sqrt{x}-1}{\sqrt{x}+1}\right|^{2m-1}.
	\end{equation}
	Finally, $r_m(x) \geq g(x)$ for all $0<x\leq 1$ and $r_m(x) \leq g(x)$ for all $x\geq 1$.
\end{proposition}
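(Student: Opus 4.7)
The plan is to transport the integrals to the symmetric interval $[-1,1]$ via the substitution $t\mapsto 1-2t$, pushing $\nu$ forward to a probability measure $\tilde\nu$ and replacing $f_t(x)$ by $\tilde f_t(x)$. Then $r_m$ is exactly the $m$-point Gaussian quadrature approximation of $\int \tilde f_t(x)\,d\tilde\nu(t)$. The key is the Chebyshev expansion $\tilde f_t(x) = 2(\sqrt x - 1/\sqrt x)\bigl[\tfrac{1}{2} + \sum_{k\geq 1}\rho^k T_k(t)\bigr]$ with $\rho=(\sqrt x-1)/(\sqrt x+1)$ from \eqref{eq:rational-cheb}: the quadrature reproduces every polynomial of degree at most $2m-1$ exactly, so every Chebyshev mode $T_k$ with $k<2m$ is annihilated by the error and only a geometric tail indexed by $k\geq 2m$ survives. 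Bounding $|\int T_k\,d\tilde\nu - \sum_i\tilde w_i T_k(\tilde s_i)|\leq 2$ (via $|T_k|\leq 1$ and the fact that both sides are probability averages) and summing the tail $|\rho|^{2m}/(1-|\rho|)$ gives the first bound \eqref{eq:nu-bound1}.

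For the refined bound, the hypothesis that $\nu$ is invariant under $t\mapsto 1-t$ translates to $\tilde\nu$ being an even measure on $[-1,1]$. A short three-term-recurrence argument then forces the orthogonal polynomials of $\tilde\nu$ to alternate between even and odd, and in particular the Gaussian nodes $\{\tilde s_i\}$ come in $\pm$ pairs with matching weights. Both $\int T_k\,d\tilde\nu$ and $\sum_i\tilde w_i T_k(\tilde s_i)$ therefore vanish for every odd $k$, so the surviving tail is $\sum_{j\geq m}\rho^{2j}=|\rho|^{2m}/(1-\rho^2)$. Using $1-\rho^2 = 4\sqrt x/(\sqrt x+1)^2$ together with the identity $(\sqrt x+1)|\sqrt x-1| = |x-1|$ is then a short algebraic manipulation that recasts the bound as $g'(1)|\sqrt x - 1/\sqrt x|^2|\rho|^{2m-1}$, which is \eqref{eq:nu-bound2}.

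The sign claim I would obtain from the classical Gauss quadrature remainder formula: for any $h\in C^{2m}[-1,1]$ there exists $\xi\in(-1,1)$ with
\[
\int h\,d\tilde\nu - \sum_i\tilde w_i h(\tilde s_i) = \frac{h^{(2m)}(\xi)}{(2m)!}\int p_m(t)^2\,d\tilde\nu(t),
\]
where $p_m$ is the degree-$m$ orthogonal polynomial of $\tilde\nu$ and the last integral is positive. Applied to $h(t)=\tilde f_t(x) = 2/(c-t)$ with $c=(\rho+\rho^{-1})/2$, all derivatives factor cleanly as $h^{(2m)}(\xi) = 2(2m)!/(c-\xi)^{2m+1}$, whose sign on $[-1,1]$ matches that of $c-t$: positive if $x>1$ (so $c>1$) and negative if $0<x<1$ (so $c<-1$). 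Multiplying by the positive factor $g'(1)$ (positivity follows from nonconstant operator monotonicity) yields $r_m(x)\leq g(x)$ for $x\geq 1$ and $r_m(x)\geq g(x)$ for $0<x\leq 1$.

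The routine parts of the plan are the geometric-series bookkeeping and the algebraic simplification in the symmetric case. The step I expect to require the most care is the parity argument for the Gaussian nodes of an even measure: one needs the combined facts that the nodes are symmetric about the origin \emph{and} that matched nodes carry equal weights, both of which follow from symmetry of the three-term recurrence but are worth spelling out. Everything else reduces to the explicit Chebyshev expansion already derived in the text and standard facts about Gaussian quadrature.
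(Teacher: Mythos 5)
Your proof is correct and follows essentially the same route as the paper's: the change of variables to $[-1,1]$, the explicit Chebyshev expansion \eqref{eq:rational-cheb}, exactness of $m$-point Gaussian quadrature up to degree $2m-1$ combined with the $|T_k|\leq 1$ tail bound, the parity argument when $\nu$ is symmetric, and the Gauss quadrature remainder formula for the sign statement. The only cosmetic difference is that the paper cites a reference for the node/weight symmetry of an even measure where you sketch the three-term-recurrence argument.
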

\begin{proof}
Let $\tilde{\nu}$ be the measure on $[-1,1]$ obtained from $\nu$ by changing
variables $t \in [0,1] \mapsto 1-2t \in [-1,1]$ so that $g(x) = g(0) + g'(1)
\int_{-1}^1 \tilde{f_t}(x) d\tilde{\nu}(t)$. Let $\tilde{\nu}_m$ be the atomic
measure supported on $m$ points obtained by applying Gaussian quadrature on
$\nu$.  Finally let the Chebyshev expansion of $\tilde{f}_t(x)$ be
$\sum_{k=0}^\infty a_k(x)T_k(t)$. Since $\int_{-1}^{1}T_k(t)\;d\tilde{\nu}(t) =
\int_{-1}^{1}T_k(t)\;d\tilde{\nu}_m(t)$ for $k\leq 2m-1$, 
	\[
	 \left|g(x) - r_m(x)\right| = g'(1)\left|\sum_{k=2m}^{\infty}a_k(x)\left[\int_{-1}^{1}T_k(t)\;d\tilde{\nu}(t) - \int_{-1}^{1}T_k(t)\;d\tilde{\nu}_m(t)\right]\right|.\]
	For $k\geq 2$, we have that $a_k(x) =
2(\sqrt{x}-1/\sqrt{x})\left(\frac{\sqrt{x}-1}{\sqrt{x}+1}\right)^k$
(see~\eqref{eq:rational-cheb}).  So using the fact that $\tilde{\nu}$ and
$\tilde{\nu}_m$ are probability measures (when $m\geq 1$), together with the
fact that $|T_k(t)|\leq 1$ for $t\in [-1,1]$, the triangle inequality gives  
	\[ \left|g(x) - r_m(x)\right| \leq 4g'(1)|\sqrt{x}-1/\sqrt{x}|\sum_{k=2m}^{\infty}\left|\frac{\sqrt{x}-1}{\sqrt{x}+1}\right|^k = 
			4g'(1)|\sqrt{x}-1/\sqrt{x}|\frac{\left|\frac{\sqrt{x}-1}{\sqrt{x}+1}\right|^{2m}}{1-\left|\frac{\sqrt{x}-1}{\sqrt{x}+1}\right|}.\] 
	If the measure $\nu$ is invariant under the map $t\mapsto -t$ then the same is true of $\nu_m$ (see, e.g.,~\cite{meurant2014fast}).
	Since $\tilde{f}_t(x^{-1}) = -\tilde{f}_{-t}(x)$ it follows that $r_m(x^{-1}) = -r_m(x)$. Furthermore,  
	\[ \int_{-1}^{1}T_{2k+1}(t)\;d\tilde{\nu}(t) = \int_{-1}^{1}T_{2k+1}(t)\;d\tilde{\nu}_m(t) = 0\qquad\textup{for all non-negative integers $k$}\]
	because Chebyshev polynomials of odd degree are odd functions. In this case only the even Chebyshev coefficients contribute to the error bound so
	\[ \left|g(x) - r_m(x)\right| \leq 4g'(1)|\sqrt{x}-1/\sqrt{x}|\sum_{k=m}^{\infty}\left|\frac{\sqrt{x}-1}{\sqrt{x}+1}\right|^{2k} = 
			g'(1)|\sqrt{x}-1/\sqrt{x}|^2\left|\frac{\sqrt{x}-1}{\sqrt{x}+1}\right|^{2m-1}.\] 
	To establish inequalities between $r_m(x)$ and $g(x)$, we use an alternative formula for the error obtained by approximating an
	integral via Gaussian quadrature. Since $t\mapsto \tilde{f}_t(x)$ has derivatives of all orders, one can show (see, e.g.,~\cite[Theorem 3.6.24]{stoer2013introduction})
	that there exists $\tau\in [-1,1]$ and $\kappa \geq 0$ such that 
	\[ g(x) - r_m(x) = \frac{\kappa}{(2m){!}} \frac{\partial^{2m}}{\partial t^{2m}}\tilde{f}_\tau(x) = \frac{\kappa}{\left(\frac{x+1}{x-1}-\tau\right)^{2m+1}}.\]
	If $x\in (0,1)$ then $\frac{1+x}{1-x}-\tau<0$ for all $\tau\in [-1,1]$ and so $g(x)-r_m(x) < 0$. If $x\in (1,\infty)$
	then $\frac{1+x}{1-x}-\tau > 0$ for all $\tau\in [-1,1]$ and so $g(x) - r_m(x) > 0$. If $x=1$ then $g(x) = r_m(x)$. 
\end{proof}

Very similar bounds hold for the error between $g:\RR_{++}\rightarrow
\RR_{++}$, a \emph{positive} operator monotone function, and $r_{m}^+$, the
rational approximation obtained by applying Gaussian quadrature to the integral
representation in~\eqref{eq:int-fplus}. Indeed if $m\geq 1$ and $x> 0$, 
\begin{equation}
	\label{eq:mu-bound}
	 \left|g(x) - r_m^+(x)\right| \leq 4(g(1)-g(0))\sqrt{x}\frac{\left|\frac{\sqrt{x}-1}{\sqrt{x}+1}\right|^{2m}}{1-\left|\frac{\sqrt{x}-1}{\sqrt{x}+1}\right|}.
\end{equation}
We omit the proof, since it follows the same basic argument as the proof
of~\eqref{eq:nu-bound1}, together with the observation that $f_t^+(x) = \frac{x}{x-1}f_t(x)$. 

\subsubsection{The special case of log: proofs of Proposition~\ref{prop:errboundscalar} and Theorem~\ref{thm:epserrorscalar}}
\label{sec:log-err}

\begin{proof}[Proof of Proposition~\ref{prop:errboundscalar}]
The function $g(x) = \log(x)$ has an integral representation \eqref{eq:gintrep} where the measure $\nu$ is the uniform measure on $[0,1]$, which is invariant under the map $t\mapsto
1-t$.  Proposition~\ref{prop:err-monotone} tells us that, for any $x>0$, 
\begin{equation}
\label{eq:log-err-x}
 |\log(x) - r_m(x)| \leq |\sqrt{x}-1/\sqrt{x}|^2\left|\frac{\sqrt{x}-1}{\sqrt{x}+1}\right|^{2m-1}.
\end{equation}
The error between $\log(x) = 2^k\log(x^{1/2^k})$ and $r_{m,k}(x) = 2^kr_m(x^{1/2^k})$ can be obtained by evaluating at $x^{1/2^k}$ and scaling by $2^k$ to obtain
\begin{equation}
\label{eq:log-err-x-rmk}
 |\log(x) - r_{m,k}(x)| \leq 2^k|\sqrt{\kappa}-1/\sqrt{\kappa}|^2\left|\frac{\sqrt{\kappa}-1}{\sqrt{\kappa}+1}\right|^{2m-1}.
\end{equation}
where $\kappa = x^{1/2^k}$. By using the fact that 
\[ \left|\frac{\sqrt{\kappa} - 1}{\sqrt{\kappa}+1}\right| = \left|\tanh\left(\frac{1}{4}\log(\kappa)\right)\right| \leq \frac{1}{2^{k+2}}|\log(x)|\]
we can write this as a bound on relative error as
\[ |\log(x) - r_{m,k}(x)| \leq |\log(x)|\left|\frac{\sqrt{\kappa} - 1/\sqrt{\kappa}}{2}\right|^2\left|\frac{\sqrt{\kappa}-1}{\sqrt{\kappa}+1}\right|^{2(m-1)}.\]

\emph{Asymptotic behavior of \eqref{eq:log-err-x-rmk}:} Since  $\kappa =
x^{1/2^k} = e^{2^{-k}\log(x)}$, we can rewrite the right-hand side
of~\eqref{eq:log-err-x-rmk} as 
\[ 2^k \left[2\sinh(2^{-(k+1)}\log(x))\right]^2 \left[\tanh(2^{-(k+2)}\log(x))\right]^{2m-1}.\]
Since $\sinh^2(2x)\tanh^{2m-1}(x) = 4x^{2m+1} + O(x^{2m+3})$, we have that 
\[ 2^k \left[2\sinh(2^{-(k+1)}\log(x))\right]^2 \left[\tanh(2^{-(k+2)}\log(x))\right]^{2m-1}\asymp 4\cdot 4^{-m(k+2)}\log(x)^{2m+1}\quad(k\rightarrow \infty).\]
\end{proof}

\begin{proof}[Proof of Theorem~\ref{thm:epserrorscalar}]
	The function $r$ is chosen to be of the form $r_{m,k}$ for certain  $m$ and $k$. 
	In particular we can choose $k = k_1+k_2$, with $k_1 = \lceil\log_2\log_e(a)\rceil+1$, 
	$k_2$ being the smallest even integer larger than $\sqrt{\log_2(32\log_e(a)/\epsilon)}$, and with $m=k_2/2$.
	The function $r_{m,k}$ has a semidefinite representation of size $m+k$ (as a special case of Theorem~\ref{thm:repKmkn} in Section~\ref{sec:op_rel_entr}), which is $O(\sqrt{\log_e(1/\epsilon)})$ for fixed $a$.
 	It remains to establish the error bound. To do so, we first note that $x^{1/2^{k_1}} < 1$ for all $x\in [1/a,a]$. 
 	Then, for all $x\in [1/a,a]$,
 	\begin{align*}
 	 |r_{m,k}(x) - \log(x)| & \leq 2^k|x^{1/2^{k+1}} - x^{-1/2^{k+1}}|^2\left|\frac{x^{1/2^{k+1}} - 1}{x^{1/2^{k+1}}+1}\right|^{2m-1}\\
 	& = 2^{k+2}\sinh^{2}\left(\frac{1}{2^{k_2+1}}\log_e(x^{1/2^{k_1}})\right) \tanh^{2m-1}\left(\frac{1}{2^{k_2+2}}\log_e(x^{1/2^{k_1}})\right)\\
 	& \leq 8\cdot 2^{k_1-1}2^{k_2}\sinh^2(1/2^{k_2+1}) \tanh^{2m-1}(1/2^{k_2+2})\\
 	& \leq 8 \log_e(a) 2^{k_2} 2^{-(k_2+2)(2m-1)}\\
 	& \leq \epsilon.
 \end{align*}
	Here, the second last equality holds because $\sinh(1/2)^2 \leq 1$, $\tanh(x)\leq x$ for all $x\geq 0$, 
	and $2^{k_1-1} \leq \log_e(a)$ (by our choice of $k_1$). The last inequality holds by our choice of $m$ and $k_2$. 
\end{proof}

\subsubsection{Proof of Theorem~\ref{thm:phi}}
\label{appsec:pf-phi}

\begin{proof}[{Proof of Theorem~\ref{thm:phi}}]
	The function $r$ is of the form $r_{m,k}$ defined by~\eqref{eq:rmkphi} for particular values of the parameters $m$ and $k$. 
	Throughout the proof, for convenience of notation, let $(x_k,y_k) = \Phi^{(k)}(x,y)$ for $k\geq 0$. 
	The error bound \eqref{eq:mu-bound} of Appendix~\ref{appsec:errorbounds} shows that for any $x,y > 0$:
	\begin{align}
		 |yr_{m,k}(x/y) - yg(x/y)| & = |y_kr_m^+(x_k/y_k) - y_kg(x_k/y_k)|\nonumber\\
		& \leq 
	4(g(1)-g(0))\sqrt{x_ky_k}\frac{\left|\frac{\sqrt{x_k}-\sqrt{y_k}}{\sqrt{x_k}+\sqrt{y_k}}\right|^{2m}}{1-\left|\frac{\sqrt{x_k}-\sqrt{y_k}}{\sqrt{x_k}+\sqrt{y_k}}\right|}
	\nonumber\\
	& = 4(g(1)-g(0)) \sqrt{x_k y_k} \frac{\left|\tanh\left(\frac{1}{4}\log(x_k/y_k)\right)\right|^{2m}}{1-\left|\tanh\left(\frac{1}{4}\log(x_k/y_k)\right)\right|}.\label{eq:err-rplus0}
	\end{align}
We will show that if $\Phi$ has the linear contraction property \eqref{eq:linear-phi}
then the bound \eqref{eq:err-rplus0} decays like $O(c^{-k^2})$ for the choice
of $m \approx k$ (that we make precise later). To establish this, we need to
bound two terms: first, if \eqref{eq:linear-phi} holds then $\log(x_k / y_k) =
O(c^{-k/2})$ and so the numerator in \eqref{eq:err-rplus0} converges like
$O(c^{-km})$ as we want. The second term that we need to control is $\sqrt{x_k
y_k}$ and one can show that this term grows at most linearly. This is proved in
the following lemma:
\begin{lemma}
There is a constant $b > 0$ such that for any $x,y > 0$ satisfying $a^{-1} \leq x/y \leq a$ we have
\begin{equation}
\label{eq:geomeanxkyk}
\sqrt{x_k y_k} \leq yb^k (1+a)/2
\end{equation}
where $(x_k,y_k) = \Phi^{(k)}(x,y)$.
\end{lemma}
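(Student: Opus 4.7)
The plan is to exploit two structural properties of $\Phi = (P_{h_1}, P_{h_2})$: each component $P_{h_i}$ is jointly monotone and positively $1$-homogeneous in $(x,y)$. Homogeneity is immediate from the perspective construction $P_{h_i}(x,y) = y\, h_i(x/y)$, and joint monotonicity is exactly the property guaranteed by the positive-valued integral representation~\eqref{eq:int-fplus}, which applies here because each $h_i$ is assumed positive and operator monotone. Combining the two properties yields the crude but sufficient bound
\[
P_{h_i}(x, y) \;\leq\; P_{h_i}(x+y,\, x+y) \;=\; h_i(1)\,(x+y).
\]

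With this in hand, I would control the sum $x_k + y_k$ by a one-line induction. Setting $b := h_1(1) + h_2(1)$,
\[
x_{k+1} + y_{k+1} \;=\; P_{h_1}(x_k, y_k) + P_{h_2}(x_k, y_k) \;\leq\; b\,(x_k + y_k),
\]
so $x_k + y_k \leq b^k (x+y)$ for all $k\geq 0$. Passing from the arithmetic mean to the geometric mean via AM-GM gives $\sqrt{x_k y_k} \leq (x_k + y_k)/2 \leq b^k (x+y)/2$.

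To finish, I use the hypothesis $a^{-1} \leq x/y \leq a$ to conclude $x + y \leq (1+a)\,y$, which combined with the previous bound yields
\[
\sqrt{x_k y_k} \;\leq\; \frac{b^k (1+a)}{2}\, y,
\]
the claimed inequality with $b = h_1(1) + h_2(1)$.

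I do not anticipate a real obstacle: the argument uses only monotonicity, homogeneity, and AM-GM. The one point that deserves care is the joint monotonicity of $P_{h_i}$, which is not a general feature of noncommutative perspectives; but it is precisely the property established in Section~\ref{sec:lowner} for perspectives of \emph{positive} operator monotone functions via~\eqref{eq:int-fplus}, and the hypotheses of the theorem ensure this. It is worth noting that no contraction property of $\Phi$ is invoked for this bound; the contraction hypotheses~\eqref{eq:linear-phi} and~\eqref{eq:quadratic-phi} enter only in the companion estimate controlling the $\tanh$ factor in~\eqref{eq:err-rplus0}.
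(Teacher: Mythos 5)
Your argument is correct, but it reaches the key estimate $x_{k+1}+y_{k+1}\leq b(x_k+y_k)$ by a different route than the paper. You use positive homogeneity together with joint monotonicity of $P_{h_i}$ to write $P_{h_i}(x,y)\leq P_{h_i}(x+y,x+y)=h_i(1)(x+y)$, giving $b=h_1(1)+h_2(1)$; the joint monotonicity you invoke is indeed available here, since $h_i$ is positive and operator monotone (in the scalar setting it also follows elementarily from concavity: $h_i(u)-uh_i'(u)\geq h_i(0)\geq 0$, so $\partial_y\left[y\,h_i(x/y)\right]\geq 0$). The paper instead avoids monotonicity altogether and bounds each $h_i$ by its tangent line at $x=1$, yielding $P_{h_i}(x,y)\leq h_i'(1)(x-y)+h_i(1)y$ and hence $b=\max\{h_1'(1)+h_2'(1),\,h_1(1)+h_2(1)-(h_1'(1)+h_2'(1))\}$, where positivity of the $h_i$ is used only to check that both coefficients are nonnegative. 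The paper's constant is never larger than yours (it is the maximum of two nonnegative numbers summing to $h_1(1)+h_2(1)$), but since the lemma only asserts existence of some $b>0$ and $b$ is treated as a constant in the proof of Theorem~\ref{thm:phi}, either choice suffices. The remainder of your argument --- the induction, AM--GM, and $x+y\leq(1+a)y$ --- coincides with the paper's, and your closing remark that the contraction hypotheses \eqref{eq:linear-phi} and \eqref{eq:quadratic-phi} play no role in this lemma is accurate.
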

\begin{proof}
Since $h_1$ and $h_2$ are concave, they are each bounded above by their linear approximation at $x=1$. As such, 
	$P_{h_i}(x,y) \leq h_i'(1)(x-y)  + h_i(1)y$ for all $x,y\in \RR_{++}$ and $i=1,2$. 
	Summing these two inequalities we see that  
	\[ P_{h_1}(x,y)+P_{h_2}(x,y) \leq \left[h_1'(1)+h_2'(1)\right]x + \left[h_1(1)+h_2(1)-(h_1'(1) +h_2'(1))\right]y.\]
	Because $h_1$ and $h_2$ take positive values, 
	$h_1(1) \geq h_1(1)-h_1'(1)\geq 0$ and $h_2(1) \geq h_2(1) - h_2'(1)\geq 0$. As such, if 
	$b = \max\{h_1'(1)+h_2'(1),h_1(1)+h_2(1) - (h_1'(1)+h_2'(1))\}$, then $P_{h_1}(x,y)+P_{h_2}(x,y) \leq b(x+y)$ for all $x,y\in \RR_{++}$. 
	It then follows that $x_k + y_k \leq b^k(x+y)$ for all $x,y\in \RR_{++}$ and so that 
	\[ \sqrt{x_ky_k} \leq (x_k+y_k)/2 \leq y b^k(1+a)/2 \]
as desired.
\end{proof}
Plugging \eqref{eq:geomeanxkyk} in \eqref{eq:err-rplus0} gives us, for any $a^{-1} \leq x/y \leq a$:
\begin{equation}
\label{eq:err-rplus}
|r_{m,k}(x/y) - g(x/y)| \leq 2(g(1) - g(0)) (1+a) b^k \frac{\left|\tanh\left(\frac{1}{4}\log(x_k/y_k)\right)\right|}{1-\left|\tanh\left(\frac{1}{4}\log(x_k/y_k)\right)\right|}.
\end{equation}

Choose $k$ to be the smallest even integer satisfying $k \geq
\max\left\{2\log_c\log(a),
\sqrt{\log_c\left(\frac{8(g(1)-g(0))(1+a)}{3\epsilon}\right)}\right\}$ and $m$
to be the smallest integer satisfying $m \geq k\max\{1,\frac{\log(b)}{\log(16)}\}$. 
Note that both $m$ and $k$ are $O(\sqrt{\log_c(1/\epsilon)})$ when we treat $a$ and $b$ as constants. With
these choices, and the assumption~\eqref{eq:linear-phi}, we have that $b^k16^{-m} \leq 1$ and 
	\[ |\log(x_{k/2}/y_{k/2})| \leq c^{-k/2}\log(a) \leq 1\quad\textup{and}\quad |\log(x_k/y_k)| \leq c^{-k/2}|\log(x_{k/2}/y_{k/2})| \leq c^{-k/2}.\]
	Using the inequality $|\tanh(z)| \leq |z|$ for all $z$, and setting
	$y=1$ in the error bound~\eqref{eq:err-rplus}, we have that
	\[ |r_{m,k}(x) - g(x)| \leq 2(g(1)-g(0))(1+a)\frac{b^kc^{-km}16^{-m}}{1-1/4} = \frac{8(g(1)-g(0))(1+a)}{3} c^{-k^2} \leq \epsilon .\]

	The size of the semidefinite representation of $r = r_{m,k}$ is
$O(m+k)$, if we view the size of the semidefinite representations of $h_1$ and
$h_2$ as being constant. Since $m,k\in O(\sqrt{\log_c(1/\epsilon)})$ it follows
that the size of the semidefinite representation of $r$ is also
$O(\sqrt{\log_c(1/\epsilon)})$. 

	In the case where the assumption~\eqref{eq:quadratic-phi} also holds,
we choose $m$ (respectively $k$) to be the smallest integer (respectively even
integer) satisfying 
	\[ k \geq \max\left\{2\log_c\log(a),2\log_2\log_{c_0}\left(\frac{8(g(1)-g(0))(1+a)}{3\epsilon}\right)\right\}\quad\textup{and}\quad
		m\geq \max\left\{1,\frac{k\log(b)}{\log(16/c_0)}\right\}.\]
	Note that both $m$ and $k$ are $O(\log_{2}\log_{c_0}(1/\epsilon))$.
With these choices, and the assumptions~\eqref{eq:linear-phi}
and~\eqref{eq:quadratic-phi}, we have that $c_0^mb^k16^{-m} \leq 1$ and
$|\log(x_{k/2}/y_{k/2})|\leq c^{-k/2}\log(a)\leq 1$ and 
	\[ |\log(x_k/y_k)| \leq c_0^{-(2^{k/2}-1)}|\log(x_{k/2}/y_{k/2})|^{2^{k/2}} \leq c_0^{-(2^{k/2}-1)}.\]
	Using the inequality $|\tanh(z)|\leq |z|$ for all $z$, and putting $y=1$ in the error bound~\eqref{eq:err-rplus}, we obtain
	\begin{align*}
		 |r_{m,k}(x) - g(x)| & \leq 2(g(1)-g(0))(1+a)\frac{b^k c_0^{-(2^{k/2}-1)m}16^{-m}}{1-1/4}\\
					& = \frac{8(g(1)-g(0))(1+a)}{3} c_0^{-2^{k/2}}
					\leq \epsilon.
	\end{align*}
\end{proof}

\section{Semidefinite description of $f_t$}
\label{app:ffplus-sdp}

In this section we establish the linear matrix inequality characterization of $f_t$ given in Proposition~\ref{prop:f-op-concave}.
We use the fact that if $t\in (0,1]$ then 
\begin{equation}
	\label{eq:ft-div}f_t(X) = (X-I)\left[t(X-I)+I\right]^{-1} = (I/t) - (I/t)\left[(X-I) + (I/t)\right]^{-1}(I/t).
\end{equation}
The characterization will follow from the following easy observation.
\begin{proposition}
\label{prop:blah}
If $A+B \pd 0$ then 
\begin{equation}
\label{eq:harmonicmeanrep}
	B - B(A+B)^{-1}B \psd T \quad\iff\quad \begin{bmatrix} A&0\\0&B\end{bmatrix} - \begin{bmatrix}T&T\\T&T \end{bmatrix} \psd 0.
	\end{equation}
\end{proposition}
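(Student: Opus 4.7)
The plan is to reduce the block matrix inequality to a Schur complement condition, after an invertible congruence puts the known positive definite block $A+B$ in the $(1,1)$ position. First I would rewrite the right-hand matrix as $\begin{bmatrix} A-T & -T \\ -T & B-T \end{bmatrix}$. Next I would apply the congruence $M^* (\cdot) M$ with the invertible matrix $M = \begin{bmatrix} I & I \\ -I & 0 \end{bmatrix}$. A routine block multiplication yields
\[
M^* \begin{bmatrix} A-T & -T \\ -T & B-T \end{bmatrix} M \;=\; \begin{bmatrix} A+B & A \\ A & A-T \end{bmatrix},
\]
so positive semidefiniteness of the original block matrix is equivalent to positive semidefiniteness of the right-hand side. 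Since $A+B \succ 0$ by hypothesis, the standard Schur complement criterion then reduces the condition to $(A-T) - A(A+B)^{-1}A \succeq 0$, i.e., $A - A(A+B)^{-1}A \succeq T$.

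To finish, I would identify $A - A(A+B)^{-1}A$ with $B - B(A+B)^{-1}B$, which is the form appearing in the proposition. Writing $A = (A+B) - B$ gives $A - A(A+B)^{-1}A = A(A+B)^{-1}B$, and symmetrically $B - B(A+B)^{-1}B = B(A+B)^{-1}A$. Since $A$, $B$, and $(A+B)^{-1}$ are all Hermitian, taking the conjugate transpose shows $\bigl(A(A+B)^{-1}B\bigr)^{*} = B(A+B)^{-1}A$; combined with the fact that $A - A(A+B)^{-1}A$ is manifestly Hermitian, this proves the two expressions coincide (the familiar symmetry of the parallel sum of $A$ and $B$).

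The only real subtlety is the choice of congruence. A direct Schur complement applied to one of the diagonal blocks of the original matrix is blocked by the fact that neither $A$ nor $B$ is assumed to be invertible individually (only $A+B$ is). The congruence by $M$ is engineered precisely to place the invertible combination $A+B$ into the top-left block, after which the rest of the argument is routine linear algebra.
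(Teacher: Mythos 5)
Your proof is correct and is essentially the paper's argument run in the opposite direction: the paper writes $B - B(A+B)^{-1}B \succeq T$ as the Schur-complement condition $\left[\begin{smallmatrix} A+B & B \\ B & B-T\end{smallmatrix}\right] \succeq 0$ and then applies the congruence by $\left[\begin{smallmatrix} I & -I \\ 0 & -I\end{smallmatrix}\right]$ to arrive at $\left[\begin{smallmatrix} A-T & -T \\ -T & B-T\end{smallmatrix}\right]$, whereas you start from that matrix and use a congruence engineered to put the invertible block $A+B$ in a corner. The only substantive difference is that your congruence produces the ``$A$-version'' $\left[\begin{smallmatrix} A+B & A \\ A & A-T\end{smallmatrix}\right]$, so you need the additional (correctly justified) parallel-sum identity $A - A(A+B)^{-1}A = A(A+B)^{-1}B = B(A+B)^{-1}A = B - B(A+B)^{-1}B$, a step the paper sidesteps by choosing its congruence so that the $B$-form appears directly.
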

\begin{proof}
The proof follows by expressing the left-hand side of \eqref{eq:harmonicmeanrep} using Schur complements, followed by a congruence transformation:
\begin{equation*}
\begin{aligned}
B - B(A+B)^{-1}B \psd T &\quad \iff \quad \begin{bmatrix} A+B & B\\ B & B-T \end{bmatrix} \psd 0\\
&\quad \iff \quad \begin{bmatrix} I & -I\\ 0 & -I \end{bmatrix} \begin{bmatrix} A+B & B\\ B & B-T \end{bmatrix} \begin{bmatrix} I & -I\\ 0 & -I \end{bmatrix}^T \psd 0\\
&\quad \iff \quad \begin{bmatrix} A-T & -T\\ -T & B-T \end{bmatrix} \psd 0.
\end{aligned}
\end{equation*}

\if0
	We begin by applying a congruence transformation to the $2\times 2$ block matrix condition to see that 
\[ \begin{bmatrix} X-T&-T\\-T&I-T\end{bmatrix}\psd 0\quad\iff\quad \begin{bmatrix} I & -I\\I&I\end{bmatrix}\begin{bmatrix} X-T&-T\\-T&I-T\end{bmatrix}\begin{bmatrix} I & -I\\I&I\end{bmatrix}^T = \begin{bmatrix} X+I & X-I\\ X-I & X+I-4T\end{bmatrix} \psd 0.\]
Taking a Schur complement we see that this is equivalent to
\begin{align*}
	0&\nsd X+I-4T - (X-I)(X+I)^{-1}(X-I)\\
	& = X+I-4T - ((X+I)-2I)(X+I)^{-1}((X+I)-2I)\\
	& = -4T+4(I-(X+I)^{-1})
\end{align*}
as required. 
\fi
\end{proof}

\begin{proof}[Proof of Proposition~\ref{prop:blah}]
We need to show that
\begin{equation}
\label{eq:ftlmi3}
 f_t(X) \psd T\quad\iff\quad \begin{bmatrix} X-I & 0\\0 & I\end{bmatrix} \psd \begin{bmatrix} T & \sqrt{t}T\\\sqrt{t}T & tT\end{bmatrix}.
 \end{equation}
The case $t=0$ can be easily verified to hold. We thus assume $0 < t \leq 1$.
Given the expression of $f_t$ in Equation \eqref{eq:ft-div} we simply apply \eqref{eq:harmonicmeanrep} with $B=(I/t)$ and $A= X-I$. This shows that
\[
f_t(X) \psd T \quad \iff \quad \begin{bmatrix} X-I & 0\\ 0 & I/t\end{bmatrix} - \begin{bmatrix} T & T\\ T & T\end{bmatrix} \psd 0.
\]
Applying a congruence transformation with the diagonal matrix $\diag(I,\sqrt{t}I)$ yields the desired linear matrix representation \eqref{eq:ftlmi3}.
\end{proof}

We can also directly get, from Proposition \ref{prop:blah}, a semidefinite
representation of the noncommutative perspective of $f_t$ defined by
$P_{f_t}(X,Y) = Y^{1/2} f_t\left(Y^{-1/2} X Y^{-1/2}\right)Y^{1/2}$.
\begin{proposition}
\label{prop:f-joint-concave}
If $t\in [0,1]$ then the perspective $P_{f_t}$ of $f_t$ is jointly matrix concave since 
\begin{equation}
\label{eq:sdprepftXY}
 P_{f_t}(X,Y) \psd T\;\;\textup{and}\;\; X,Y \pd 0 \quad\iff\quad 
\begin{bmatrix} X-Y & 0\\0 & Y\end{bmatrix} - 
\begin{bmatrix} T & \sqrt{t}T\\\sqrt{t}T & t T\end{bmatrix} \psd 0 \;\;\textup{and}\;\; X,Y \pd 0.
\end{equation}
\end{proposition}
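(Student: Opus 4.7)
The plan is to derive the semidefinite representation for $P_{f_t}$ directly from that of $f_t$ given in Proposition~\ref{prop:f-op-concave}, by substituting the argument $Y^{-1/2}XY^{-1/2}$ in place of $X$, and then cleaning up by a congruence transformation.

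First I would observe that, by the definition of the noncommutative perspective, $P_{f_t}(X,Y)\psd T$ with $Y\pd 0$ is equivalent to $f_t\bigl(Y^{-1/2}XY^{-1/2}\bigr)\psd Y^{-1/2}TY^{-1/2}$. Applying Proposition~\ref{prop:f-op-concave} to the positive definite matrix $Y^{-1/2}XY^{-1/2}$ with $\tilde T:=Y^{-1/2}TY^{-1/2}$, the latter is equivalent to
\[
\begin{bmatrix} Y^{-1/2}XY^{-1/2}-I & 0\\ 0 & I\end{bmatrix}
-\begin{bmatrix} \tilde T & \sqrt{t}\,\tilde T\\ \sqrt{t}\,\tilde T & t\tilde T\end{bmatrix}\psd 0.
\]
Next, I would conjugate this $2\times 2$ block linear matrix inequality by the block-diagonal matrix $\diag(Y^{1/2},Y^{1/2})$, which is invertible since $Y\pd 0$. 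This congruence transformation preserves semidefiniteness and converts each block: $Y^{1/2}(Y^{-1/2}XY^{-1/2}-I)Y^{1/2}=X-Y$, $Y^{1/2}\cdot I\cdot Y^{1/2}=Y$, and $Y^{1/2}(Y^{-1/2}TY^{-1/2})Y^{1/2}=T$ (with the $\sqrt{t}$ and $t$ factors passing through unchanged). This yields precisely the linear matrix inequality stated in~\eqref{eq:sdprepftXY}.

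Finally, joint operator concavity of $P_{f_t}$ is automatic from this representation: the set of $(X,Y,T)\in\S_{++}^n\times\S_{++}^n\times\S^n$ satisfying the right-hand side of~\eqref{eq:sdprepftXY} is the intersection of an open convex set (defined by $X,Y\pd 0$) with the preimage of $\psdcone{2n}$ under a linear map in $(X,Y,T)$, hence is convex. Since this set is exactly the hypograph of $P_{f_t}$ on $\S_{++}^n\times\S_{++}^n$, the function $P_{f_t}$ is jointly matrix concave in $(X,Y)$.

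The only step with any substance is verifying that the congruence transformation indeed produces the stated LMI, and handling the boundary case $t=0$ (for which $f_t(x)=x-1$ and $P_{f_t}(X,Y)=X-Y$, and the LMI reduces to $X-Y\psd T$ together with $Y\psd 0$, which is easily checked directly). I do not anticipate any genuine obstacle; the proof is essentially a homogenization of Proposition~\ref{prop:f-op-concave}.
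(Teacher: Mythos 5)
Your proof is correct, but it takes a genuinely different route from the paper's. The paper re-derives the representation from scratch for the perspective: it writes $P_{f_t}(X,Y) = (Y/t) - (Y/t)\left[(X-Y)+(Y/t)\right]^{-1}(Y/t)$ and then applies the Schur-complement lemma (Proposition~\ref{prop:blah}) with $A = X-Y$, $B = Y/t$, followed by a congruence with $\diag(I,\sqrt{t}I)$. You instead \emph{deduce} the perspective representation from the already-established single-argument representation \eqref{eq:sdprepft1} by substituting $Y^{-1/2}XY^{-1/2}$ and $Y^{-1/2}TY^{-1/2}$ and conjugating the resulting $2n\times 2n$ LMI by $\diag(Y^{1/2},Y^{1/2})$; the block computations you indicate ($Y^{1/2}(Y^{-1/2}XY^{-1/2}-I)Y^{1/2}=X-Y$, etc.) all check out, and the equivalence is preserved because congruence by an invertible matrix preserves positive semidefiniteness. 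Your argument is a clean instance of the general homogenization principle the paper only alludes to in its Discussion section (a free LMI representation of an operator concave function yields one for its noncommutative perspective), and it requires no new Schur-complement computation. What the paper's route buys in exchange is the explicit identity $tP_{f_t}(X,Y) = Y - Y(Y+t(X-Y))^{-1}Y$, which is reused in Section~\ref{sec:qre} to obtain the smaller $(n+1)\times(n+1)$ representation for the quantum relative entropy; your derivation does not produce that identity as a byproduct. Your handling of the concavity claim (convexity of the matrix hypograph from the LMI description) and of the boundary case $t=0$ matches the paper's.
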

\begin{proof}
From the definition of $P_{f_t}$ and the expression \eqref{eq:ft-div} for $f_t$ it is easy to see that we have:
\[
P_{f_t}(X,Y) = (Y/t) - (Y/t) [(X-Y) + (Y/t)]^{-1} (Y/t).
\]
The semidefinite representation \eqref{eq:sdprepftXY} then follows easily by
applying \eqref{eq:harmonicmeanrep} with $B=Y/t$ and $A=X-Y$, followed by
applying a congruence transformation with the diagonal matrix
$\diag(1,\sqrt{t})$.
\end{proof}

\section{Integral representations of operator monotone functions}
\label{app:int-rep}

In this section we show how to obtain the integral representations~\eqref{eq:int-f} and~\eqref{eq:int-fplus} as a fairly easy reworking of the following result.
\begin{theorem}[{\cite[Theorem 4.4]{hansen1982jensen}}]
	\label{thm:intsymint}
	If $h:(-1,1)\rightarrow \RR$ is non-constant and operator monotone then there is a unique probability measure $\tilde{\nu}$ supported on $[-1,1]$ such that 
	\begin{equation}
	\label{eq:hint} h(z) = h(0) + h'(0)\int_{-1}^{1}\frac{z}{1-tz}\;d\tilde{\nu}(t).
	\end{equation}
\end{theorem}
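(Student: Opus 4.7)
The plan is to deduce this as a consequence of the Nevanlinna representation of Pick functions. First I would invoke the classical analytic-extension part of L\"owner's theorem: any operator monotone $h:(-1,1)\to \RR$ extends uniquely to a holomorphic function $H$ on $\CC\setminus((-\infty,-1]\cup[1,\infty))$ whose restriction to the upper half-plane $\mathbb{H}^+$ is a \emph{Pick function}, i.e.\ $\operatorname{Im} H(z)\geq 0$ for $z\in\mathbb{H}^+$. The proof of this step proceeds through positivity of the L\"owner matrices $\bigl[(h(x_i)-h(x_j))/(x_i-x_j)\bigr]_{ij}$ together with a Herglotz-style limiting argument, and is the only serious ingredient; everything afterwards is computational bookkeeping.

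Next I would apply the Nevanlinna representation: every Pick function admits a unique description
\[
H(z) \;=\; \alpha + \beta z + \int_{\RR}\frac{1+sz}{s-z}\,d\sigma(s),
\]
with $\alpha\in\RR$, $\beta\geq 0$, and $\sigma$ a finite positive Borel measure on $\RR$. Because $H$ extends analytically and is real-valued on $(-1,1)$, the Stieltjes inversion formula $d\sigma=\tfrac{1}{\pi}\lim_{\varepsilon\downarrow 0}\operatorname{Im} H(s+i\varepsilon)\,ds$ forces $\sigma$ to be supported on $(-\infty,-1]\cup[1,\infty)$.

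I would then change variables $t=1/s$, transporting $\sigma$ to a finite positive measure on $[-1,0)\cup(0,1]$. The algebraic identity
\[
\frac{1+sz}{s-z} \;=\; t + \frac{(1+t^2)\,z}{1-tz}
\]
converts the Nevanlinna kernel into the target kernel $z/(1-tz)$ plus a $t$-dependent constant. Absorbing the constant piece into $\alpha$, reweighting by the factor $1+t^2$, and representing the linear term $\beta z$ as an atom of mass $\beta$ at $t=0$ (where $z/(1-tz)=z$), I arrive at a finite positive measure $\rho$ on $[-1,1]$ together with a constant $\alpha'$ such that
\[
h(z) \;=\; \alpha' + \int_{-1}^{1}\frac{z}{1-tz}\,d\rho(t).
\]
Evaluating at $z=0$ and differentiating at $z=0$ give $\alpha'=h(0)$ and $\rho([-1,1])=h'(0)$; since $h$ is non-constant, necessarily $\rho\neq 0$ and hence $h'(0)>0$, so $\tilde\nu:=\rho/h'(0)$ is the required probability measure.

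For uniqueness I would expand $z/(1-tz)=\sum_{k\geq 1} t^{k-1}z^k$ on $|z|<1$, which shows the Taylor coefficients of $h$ at $0$ determine all moments $\int_{-1}^{1}t^{k-1}\,d\tilde\nu(t)$. A probability measure on the compact interval $[-1,1]$ is determined by its moments (Hausdorff moment problem), so $\tilde\nu$ is unique. The main obstacle is the Pick extension in the first step; once that is available, the remaining work is a substitution and a classical moment-matching argument.
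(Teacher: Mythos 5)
The paper does not prove this statement; it quotes it from Hansen and Pedersen \cite[Theorem 4.4]{hansen1982jensen}, so there is no internal proof to compare against. Your argument is correct and follows the classical complex-analytic route: reduce to the analytic form of L\"owner's theorem (operator monotone on $(-1,1)$ implies extension to a Pick function), apply the Nevanlinna--Herglotz representation $H(z)=\alpha+\beta z+\int_{\mathbb{R}}\frac{1+sz}{s-z}\,d\sigma(s)$, localize $\sigma$ to $(-\infty,-1]\cup[1,\infty)$ by Stieltjes inversion, and transport the kernel via $t=1/s$ using the (correct) identity $\frac{1+sz}{s-z}=t+\frac{(1+t^2)z}{1-tz}$, with the linear term absorbed as an atom of mass $\beta$ at $t=0$; the normalization $\alpha'=h(0)$, $\rho([-1,1])=h'(0)>0$, and the Hausdorff moment argument for uniqueness are all fine. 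This is a genuinely different route from the cited source: Hansen and Pedersen argue in a real-variable, convexity-theoretic way, showing that the operator monotone functions on $(-1,1)$ normalized by $h(0)=0$, $h'(0)=1$ form a compact convex set whose extreme points are precisely $z\mapsto z/(1-tz)$ for $t\in[-1,1]$, and then invoking Krein--Milman to produce the representing measure. Your route outsources the hardest analytic step (positivity of L\"owner matrices implying the Pick property) to a black box, which is legitimate but worth flagging since that step carries essentially all the difficulty of the theorem; the extreme-point route avoids complex analysis and explains directly why the kernel $z/(1-tz)$ is canonical. One minor imprecision on your side: the Stieltjes inversion formula holds only in a weak sense (the measure may have singular parts), but the fact you actually need --- that $\sigma$ vanishes on any open interval across which $H$ continues holomorphically with real boundary values --- is standard and correct.
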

Suppose $g:\RR_{++}\rightarrow \RR$ is operator monotone. Then it is straightforward to check that $h:(-1,1)\rightarrow \RR$ defined by
$h(z) = g\left(\frac{1+z}{1-z}\right)$ is operator monotone and that $g(x) = h\left(\frac{x-1}{x+1}\right)$.  By applying Theorem~\ref{thm:intsymint}
to $h(z)$ and then evaluating at $z = \frac{x-1}{x+1}$, we obtain the integral representation
\[ g(x) = h(0) + h'(0)\int_{-1}^{1}\frac{(x-1)}{(x+1)-t(x-1)}\;d\tilde{\nu}(t).\]
Using the fact that $h(0) = g(1)$ and $h'(0) = 2g'(1)$, and applying a linear change of coordinates to rewrite the integral over $[0,1]$,
 we see that there is a probability measure $\nu$ on $[0,1]$ such that
\begin{equation}	
\label{eq:int-f-repeat} g(x) = g(1) + g'(1)\int_{0}^{1}f_t(x)\;d\nu(t).
\end{equation}
This establishes~\eqref{eq:int-f}.
If, in addition, $g$ takes positive values, then $g(0) := \lim_{x\rightarrow 0}g(x) \geq 0$. 
Hence 
\[ g(0) = g(1) + g'(1)\int_{0}^{1}f_t(0)\;d\nu(t) = g(1)+g'(1)\int_{0}^{1}\frac{-1}{1-t}\;d\nu(t)\geq 0,\]
so we can define a probability measure supported on $[0,1]$ by $d\mu(t) = \frac{g'(1)}{g(1)-g(0)}\left(\frac{1}{1-t}\right)\;d\nu(t)$. 
Then using the fact that $f_t(x) = \frac{1}{1-t}\left[f^+_t(x)-1\right]$ we immediately obtain, from~\eqref{eq:int-f-repeat}, the representation
\[ g(x) = g(0) + (g(1)-g(0))\int_{0}^{1}f^+_t(x)\;d\mu(t).\]
This establishes~\eqref{eq:int-fplus}.


\if\focm1
\bibliographystyle{plain}
\else
\bibliographystyle{alpha}
\fi
\bibliography{../log_approx}

\end{document}